\colorlet{cite}{LimeGreen!50!Green}
\tikzset{ 
  baseline=-2.3pt,
  text height=1.5ex, text depth=0.25ex,
  >=stealth,
  node distance=2cm,
  mid/.style={fill=white,inner sep=2.5pt},
}
\newcommand{\ce}{\mathrel{\mathop:}=}
\theoremstyle{plain}	
\newtheorem{theorem}{Theorem} 
\newtheorem{lemma}[theorem]{Lemma} 
\newtheorem*{theorem*}{Theorem}
\newtheorem*{conjecture*}{Conjecture}
\theoremstyle{remark}
\newtheorem{remark}[theorem]{Remark}
\newtheorem{example}[theorem]{Example}
\DeclareMathOperator{\Ext}{Ext}
\DeclareMathOperator{\Hom}{Hom}
\DeclareMathOperator{\coker}{coker}
\DeclareMathOperator{\im}{im}
\DeclareMathOperator{\Fuk}{Fuk}
\DeclareMathOperator{\Tot}{Tot}
\DeclareMathOperator{\Coh}{Coh}
\DeclareMathOperator{\Spec}{Spec}
\DeclareMathOperator{\Diag}{Diag}
\DeclareMathOperator{\tr}{tr}
\DeclareMathOperator{\adj}{adj}
\DeclareMathOperator{\LG}{LG}
\begin{document}
\author{Elizabeth Gasparim }
\address{E. Gasparim- Depto. Matem\'aticas, Univ. Cat\'olica del Norte, Antofagasta, Chile. etgasparim@gmail.com}
\title{Intrinsic mirrors for minimal adjoint orbits}

\begin{abstract} I discuss   mirrors
of   Landau--Ginzburg  models  formed by 
 a  minimal semisimple adjoint orbit of $\mathfrak{sl}(n)$  together with  a potential  obtained 
 via the Cartan--Killing form.
I show that the  Landau--Ginzburg models produced by the Gross--Siebert recipe
give precisely  the objects  of the desired mirrors. 

It is known that   Landau--Ginzburg  model $\LG(2)$   over the 
 semisimple adjoint orbit of $\mathfrak{sl}(2)$ does not have projective mirrors. 
 I prove Homological Mirror Symmetry for  $\LG(2)$  by constructing
  a  Landau--Ginzburg mirror and  showing that its Orlov category of singularities 
 is equivalent to $\Fuk(\LG(2))$.
\end{abstract}
\maketitle

\tableofcontents

\section{LG models on adjoint orbits,  Fukaya--Seidel categories and mirrors}

Given a nicely behaved  $A$-side  Landau--Ginzburg model, that is, a complex potential function  $f\colon X\rightarrow \mathbb C$ with 
at most Morse type singularities defining a symplectic Lefschetz fibration on a noncompact symplectic manifold $X$, there
are 2 main
conjectured  ways to find a mirror. \\

{\bf HMS1}.  Compactify  $X$ and extend  its potential to a function with target $\mathbb P^1$, and then look for  (a subvariety of) a
complex variety $Y$ such that the bounded derived category of coherent sheaves $D^b\! \Coh(Y) $
 is equivalent to the Fukaya--Seidel category of the compactification  of $(X,f)$. Hence, here we search for an equivalence of categories
$$\Fuk (X,f) \equiv D^b\!\Coh(Y)  .$$

{\bf HMS2}. find a mirror Landau--Ginzburg model $g \colon Y \rightarrow \mathbb C$ such that the Orlov's category of singularities 
$D_{sg}(Y) \ce \bigoplus_i \frac{D^b\! \Coh (Y_i)}{\mathfrak{Perf}(Y_i)} $, where $Y_i$ are the singular fibres of $g$,
is equivalent to the  Fukaya--Seidel category  $\Fuk(X,f)$ of Lagrangian thimbles of $f$. 
Hence, here we search for an equivalence of categories
$$\Fuk (X,f) \equiv  D_{sg}(Y). $$

We consider  the case when $X$ is a minimal adjoint orbit of $\mathfrak{sl}(n, \mathbb C)$, namely, 
the adjoint orbit   $\mathcal O(H_0)$ of an element $H_0= \Diag(n-1, -1, \dots, -1)$, which is diffeomorphic 
to the cotangent bundle of $\mathbb P^{n-1}$
 \cite[Thm.\thinspace 2.1] {GGSM2}.
Over such minimal adjoint orbits we have a preferred choice, denoted $\LG(n)$,  of Landau--Ginzburg model coming 
from Lie theory. Indeed, for each regular element $H \in \mathfrak h_{\mathbb{R}}$
the theorem 	\cite[Thm.\thinspace3.1]{GGSM1} tells us that 
	 the potential $f_{H}:\mathcal{O} \left( H_{0}\right) \rightarrow \mathbb{C}$ defined by
	\[
	f_{H}\left( A\right) = \langle H,A\rangle \qquad A\in \mathcal{O}\left(H_{0}\right)
	\]
	has a finite number of isolated singularities and defines a symplectic Lefschetz fibration, hence an $A$-side Landau--Ginzburg model; 
	that is to say
that  $f_H$ admits only  nondegenerate singularities; 
 there exists a symplectic form $\Omega $ on $\mathcal{O}\left(H_{0}\right) $ such that the regular fibres 
 of $f_H$ are pairwise isomorphic symplectic submanifolds;
	and  at each critical point, the tangent cone of singular fibre can be written as a disjoint union of affine subspaces contained in $\mathcal O \left( H_0 \right)$, each symplectic with respect to $\Omega$.

 \cite{BBGGSM} showed that conjecture HMS1 totally fails  to produce a mirror for  the Landau--Ginzburg model $\LG(2)$
associated to the semisimple adjoint orbit of $\mathfrak{sl}(2, \mathbb C)$, by proving that there does not exist  (any subvariety of ) a projective variety $Y$ 
whose bounded derived category of coherent sheaves $D^b\!\Coh(Y)$ is equivalent to the Fukaya--Seidel category $\Fuk
\LG(2)$. 

 I show that the Gross--Siebert intrinsic mirror symmetry algorithm produces a Landau--Ginzburg model $(\mathcal Y_{n+1},g)$ 
mirror candidate  to $\LG(n)$ for each $n$, in the sense that the Orlov's category of singularities $D_{sg}(\mathcal Y_{n+1})$ corresponds  to 
$\Fuk\LG(n)$.  I prove the full categorical equivalence for the case of $\mathfrak{sl}(2)$.

 \begin{theorem} \label{mirror2} Homological Mirror Symmetry for $LG(2)$ works as follows:
 \begin{itemize} 
 \item {\bf HMS 1} fails for $LG(2)$, that is, for any (quasi) projective variety $Y$:
 $$ \Fuk(\LG(2))\not\equiv D^b\!\!\Coh(Y)$$ 
\item {\bf HMS 2} holds true  for $LG(2)$, that is, 
 we have an equivalence of categories:
 $$\Fuk(\LG(2)) \equiv D_{sg}(Y_2) .$$
 \end{itemize}
 \end{theorem}
 
 For the cases when  $n>2$, I discuss 
the equivalence $\Fuk(\LG(n+1)) \equiv D_{sg}(\mathcal Y_{n+1})$ at the level of objects and  calculate the morphisms in the 
category
 $D_{sg}(\mathcal Y_{n+1})$. 
Since the morphisms of $\LG(n)$  are not yet known for   $n>2 $, at this point   
we can just say that $(\mathcal Y_{n+1},g)$ is a mirror candidate for $\LG(n)$.

\begin{theorem}\label{mirrorn}
The intrinsic mirror symmetry algorithm produces an $\LG$-model $(\mathcal Y_{n+1},g)$
for which
 $$ \Fuk(\LG(n+1)) \simeq D_{sg}(\mathcal Y_{n+1})$$
 is a 1-1 correspondence of objects.
\end{theorem}

I also  completely calculate the  category $D_{sg}(\mathcal Y_{n+1})$. Its objects
are the nonperfect sheaves  $\mathcal F(z_0), \mathcal F(z_1),\dots, \mathcal F (z_n)$
 defined in section \ref{catsing}, where $\mathcal F(z_0)$ is supported on the fibre over zero, 
 and all the others are supported on the fibre over infinity. 
 This shows that the general case is analogous to what happens for $LG(3)$ 
 as depicted in figure \ref{fig2}.

\begin{theorem}\label{ort} The Orlov category of singularities  of the Landau--Ginzburg model $({\mathcal Z}_{n+1})$
is generated by the nonperfect shaves   $\mathcal F(z_0), \mathcal F(z_1),\dots, \mathcal F (z_n)$
with  morphisms: 
$${\mathbf \Hom}\left(\mathcal F(z_i), \mathcal F(z_j)\right) = 
\left\{\begin{array}{lllll}
N_{ij} \bigoplus_{t=2s+1} M_{ij}[t]  & \text{if} & i \cdot j \neq 0,\\
0 & \text{if} & i\cdot j =0 \text{.}
\end{array}
\right.$$ 
\end{theorem}

The calculation of  morphisms   in $D_{sg}(\mathcal Y_{n+1})$  may
 be regarded as a prediction of how such morphisms in $\LG(n)$ ought to behave.

\section{Minimal orbits and extending the potential to the compactification}

We  focus on the case of minimal semisimple orbits. Hence, for each $n$,  we discuss the orbit 
through  $H_0=\Diag(n,-1, \dots, -1)$ which  compactifies to 
$\mathbb P^n \times {\mathbb P^n}^*.$

Let $H=\Diag(\lambda_1,\ldots,\lambda_{n+1})\in \mathfrak{h}_{\mathbb R}$, with $\lambda_1>\ldots>\lambda_{n+1} $ and
 $\lambda_1 + \ldots +\lambda_{n+1}=0$ (where $\mathfrak{h}$ is the  Cartan subalgebra of  $\mathfrak{sl}(n+1)$).

Following \cite{BGGSM}, we  describe a rational map (factored through the  Segre embedding) that coincides with the potential $f_H$ 
on the adjoint orbit $\mathcal O(H_0)$. Such a rational map is given by 
\begin{equation}
 \psi:\mathbb{P}^{n}\times G_{n}(\mathbb{C}^{n+1})\rightarrow \mathbb{P}^1, 
\end{equation} 
\begin{equation}
\psi([v],[\varepsilon]) = \frac{\tr((v\otimes \varepsilon) \rho(H))}{\tr(v\otimes \varepsilon)}=\frac{\sum_{i=1}^{n+1}{\lambda_i a_{i1} (\adj g)_{1i} }}{\sum_{i=1}^{n+1}{ a_{i1} (\adj g)_{1i} }},
\end{equation}
where the identification $([v],[\varepsilon])\mapsto v\otimes \varepsilon$ is described in \cite[Sec. 4.2]{GGSM2}
and $\rho$
is the canonical representation . Observe that if  $([v],[\varepsilon])$ belongs to the adjoint orbit, then $\tr(v\otimes\varepsilon)=1$. 
Furthermore, the complement of the orbit in the compactification  is the  { incidence correspondence variety} $\Sigma$, that is, 
the set of pairs $(\ell,\pi)$ such that
$
0\subset \ell\subset \pi \subset\mathbb{C}^{n+1},
$
where $\pi$ is a hyperplane in  $\mathbb{C}^{n+1}$ and $\ell\subset \pi$ is a line. 
The variety $\Sigma$ is the 2 step flag manifold classically denoted in Lie theory by  $\mathbb{F}(1,n)$;
it is a divisor in $\mathbb P^n \times \mathbb P^n$, and the fibre at infinity of the potential $f_H$,
that is, $\psi=[f_H:1]$ over the adjoint orbit, and $\psi(\mathbb F(1,n))=[1:0]$.

\section{The Landau--Ginzburg  model $\LG(2)$}
We start with the Landau--Ginzburg model $\LG(2) = (\mathcal O_2, f_H)$ where 
$\mathcal O_2$ is the semisimple adjoint orbit of $\mathfrak{sl}(2,\mathbb C)$, 
together with the potential $f_H$ obtained by pairing with the regular element $H$ via the Cartan--Killing form.
We choose $$H=H_0= \left(\begin{matrix} 1 & 0 \\ 0 & -1 \end{matrix}\right)$$
and the adjoint orbit  $\mathcal O_2\ce \mathcal O(H_0)$ is then
$\mathcal O_2= \mathrm{Ad}(\mathrm{SL}(2)) H_0= \left\{gH_0g^{-1}: g \in \mathrm{SL}(2)\right\}$
with potential 
$$\begin{array}{rcl} f_H\colon \mathcal O_2& \longrightarrow & \mathbb C\\
 A & \longmapsto & \langle H,A\rangle.\end{array}$$

The Fukaya--Seidel category of $\LG(2)$ was calculated in \cite{BBGGSM} as follows:

\begin{theorem*}\cite[Thm.\thinspace 3.1]{BBGGSM}
The Fukaya--Seidel category of $\LG(2)$ is generated by 2 Lagrangians $L_0,L_1$ with morphisms:
$$\Hom(L_i,L_j) = \left\{\begin{array}{ll}
\mathbb Z \oplus \mathbb Z[-1] & i<j\\
\mathbb Z & i=j\\
0 & i>j
\end{array}\right.$$
and the products  $m_k$ all vanish except for $m_2(.,id)$ and $m_2(id, .)$.

\end{theorem*}

\begin{remark}
We observe that the 2 Lagrangians $L_0$ and $L_1$ have  the same vanishing cycle, thus forming
a matching cycle, as depicted in figure \ref{sphere}. The matching cycle formed by $L_0 \cup L_1$ 
is in fact the Lagrangian sphere $S^2\simeq \mathbb P^1$ identified with the zero section of $T^*\mathbb P^1$
(the smooth type of $\mathcal O_2$). However, there will be no matching cycles on $\mathcal O_n$ 
for $n>2$, because the existence of 
 a Lagrangian $n$-sphere is prohibited by the topological
type  $\mathcal O_n \simeq T^*\mathbb P^n$ when $n>1$.
This explains the absence of spheres in figure \ref{fig2}.
\end{remark}

The goal is  to calculate a Landau--Ginzburg mirror to $\LG(2)$. 
For this, we  use the compactification described in \cite[Thm.\thinspace 6.3]{BBGGSM}
giving  $\overline{\LG}(2)=(\mathbb P^1\times \mathbb P^1,R_H)$
where $$R_H([x:y][z:w]) = [xw+yz:xw-yz]$$
is the extension of $f_H$ to a rational map $R_H\colon \mathbb P^1\times \mathbb P^1\dashrightarrow \mathbb P^1$. 
Let $\Delta $ be the diagonal in $\mathbb P^1\times \mathbb P^1$, then we have  that
$\mathbb P^1\times \mathbb P^1\simeq \mathcal O_2 \cup \Delta .$

\subsection{The intrinsic mirror of $\LG(2)$}\label{int2}

We start with the pair $(\mathbb P^1\times \mathbb P^1,\Delta)$, but to
fit into the hypothesis of  log geometry we need 
to work relatively to an anti-canonical divisor, in this case a divisor of
type $(2,2)$. So, we choose  an additional divisor 
$\Delta'$ of type $(1,1)$ such that $\Delta \cap \Delta'= \{p_1,p_2\}$ consists of 2 points. 

Let $\widetilde{X}$ denote the blow up of $\mathbb P^1\times \mathbb P^1$
at these 2 points.
We then study the pair 
$$(\widetilde{X},D) \qquad \text{with}  \qquad D= D_1+D_2+ D_3 + D_4$$
where $D_2$ and $D_4$ are the proper transforms of 
$\Delta'$  and $\Delta$ respectively, and $D_1$ and $D_3$ are the exceptional curves obtained from blowing
up the 2 points; thus
$D_1^2=D_3^2=-1.$

To obtain the dual picture, following the Gross--Siebert intrinsic mirror recipe,
we consider the vector space generated by the divisors $D_i$; then
 take a dual basis $D_1^*,D_2^*,D_3^*,D_4^*$, 
and theta functions $\vartheta_0,\vartheta_1,\vartheta_2,\vartheta_3,\vartheta_4$.
Here $\vartheta_0$ is the theta function corresponding to the
origin in the dual complex. 

To discuss punctured Gromov--Witten invariants, we choose
$$p=D_2^*, \quad q=D_4^*, \quad r=0.$$
Following the  steps described in the introduction of  \cite{GS}, we obtain a dual surface $S$ described by the 2 equations:
\begin{equation}\label{eq1}\vartheta_2\vartheta_4 = \vartheta_0(t^M+t^N)+\vartheta_1t^{D_1}+\vartheta_3t^{D_3}
\end{equation}
\begin{equation}\label{eq2}\vartheta_1\vartheta_3= \vartheta_0t^{D_2},
\end{equation}
where $M$ and $N$ are the classes of lines of type $(1,0)$ and $(0,1)$ respectively.
These equations arise as follows. In the product
$\vartheta_2\vartheta_4$, we are looking for curves which meet 
$D_2$ and $D_4$ to order $1$ at one point each, but may have a negative
tangency point with one of the $D_i$. 
First, in a ruling of  $\mathbb P^1 \times \mathbb P^1$  either class
 $M$ or $N$ meets $D_2$ and $D_4$ transversally at one point
each, and is disjoint from $D_1$ and $D_3$. This produces the 
contribution $\vartheta_0(t^M+t^N)$. Second, the curve $D_1$
meets $D_2$ and $D_4$ transversally at one point each, and has self-intersection
$-1$ with $D_1$. Hence, after fixing a point $z\in D_1$, there is
a unique punctured curve structure on $D_1$ giving the term
$\vartheta_1t^{D_1}$. Similarly, replacing $1$ with $3$, we get
$\vartheta_3t^{D_3}$. These are the only contributions to
the product $\vartheta_2\vartheta_4$.
For $\vartheta_1\vartheta_3$, a curve in the pencil $|D_2|$ meets
$D_1$ and $D_3$ transversally, so after choosing a general point $z$, there
is one such curve passing through $z$. This gives the term
$\vartheta_0 t^{D_2}$.

Given that $\vartheta_0$ is the theta function corresponding to the
origin in the dual complex, it is known that $\vartheta_0$ is the
unit in the dual ring, so we may replace it with $1$ in the equations that follow. 

Let $P$ be the smallest monoid containing all classes of stable maps into  $S$.
We then obtain a mirror,  by first taking
$$R= \mathbb C[P][\vartheta_1,\vartheta_2,\vartheta_3,\vartheta_4]/{(\ref{eq1})(\ref{eq2})}$$
and then the mirror variety is given by  
$\Spec R .$ 
This would give the mirror to $\widetilde{X} \setminus D_2$, since we have added the extra divisor. 

To continue analyzing the equations, we use the argument that $\vartheta_0$ is a unit, 
reducing the equations to:
$$\vartheta_2\vartheta_4 = \vartheta_0(t^M+t^N)+\vartheta_1t^{D_1}+\vartheta_3t^{D_3},
$$
where we have
replaced $\vartheta_0$ with $1$. We then note that the group of curve
classes of the variety is generated by $M,N,D_1,D_3$, and
$D_2\sim M+N-D_1-D_3\sim D_4$. Thus, setting $t^M,t^N,t^{D_1},t^{D_3}$
to coefficients $a,b,c,d$ would give equations
\begin{align*}
\vartheta_2\vartheta_4 = {} & a+b+c\vartheta_1 +d \vartheta_3\\
\vartheta_1\vartheta_3 = {} & {ab\over cd},
\end{align*}
where we eliminate $\vartheta_3$, getting
\[
\vartheta_2\vartheta_4=a+b+c\vartheta_1+{ab\over c}\vartheta_1^{-1}.
\]
Then, for a sufficiently general choice of coefficients $\alpha,\beta,
\gamma$, 
we may rewrite the general equation as
\begin{equation}\label{mir}
\vartheta_2\vartheta_4=\alpha\vartheta_1+\beta\vartheta_1^{-1}+\gamma
\end{equation}
which defines a surface in $ \mathbb C^* \times \mathbb C^2 $.

To obtain an LG model,  we take 
 as a potential the function  $\vartheta_2$,
linked to the compactification divisor, and 
 we shall see that the choice of $\vartheta_2$ solves the problem at hand.
 

\subsection{Critical points  of the potential}

To simplify calculations we set the constants to one: $\alpha=\beta= \gamma=1$,
and after one blow-up, we obtain the equation of the  surface  $u\vartheta_2=v
(\vartheta_1+\vartheta_1^{-1}+1 ).$  Changing to more standard algebraic coordinates
 $ x \ce \vartheta_1, y \ce \vartheta_2$ and compactifying the direction $\vartheta_4$ to $\mathbb P^1$ 
 with coordinates $[u:v]$,
our problem is then to study the surface 
$\mathcal Y_2 \subset \mathbb C^*\times \mathbb C\times \mathbb P^1 = \{(x,y),[u:v]\}$  given by
$$\mathcal Y_2\ce \{uy=v(x+1+1/x)\}$$
with potential $$g\ce y=v(x+1+1/x)/u.$$
Here the subscript 2 in the notation for surface $\mathcal Y_2$  points to the fact that this will 
be the variety appearing in the mirror for the orbit of $\mathfrak{sl}(2)$. \\

The critical points of the potential occur when $(x,y)[u:v]$ takes the values:
$$ p_1\ce (x_1,0)[1:0],  \text{and} \quad p_2\ce (x_2,0)[1:0],$$
where $x_1,x_2$ are solutions of the equation 
$x+1+1/x=0$.
In the affine chart where $u=1$ we have the fiber over $y=0$, given as the curve $Y_o$
  inside the affine chart $\mathbb C^* \times \mathbb C^2$ with coordinates $(x,y,v)$  
cut out by the 2 equations:
$$Y_o\ce \{(x,y,v): y=0= v(x^2+x+1)\}\subset \mathbb C^* \times \mathbb C^2$$ 
which contains 2 double points $p_1$ 
and $p_2$. 
%

Taking $v \mapsto \infty $ we also obtain the point $[0:1] \in \mathbb P^1$, so we see that the critical fibre
is  $Y\ce \{y=0\}$ and has 3 irreducible components 
$Y = Y_0\cup Y_1\cup Y_2$ where
$$Y \ce \left\{\begin{array}{l}
Y_0= \mathbb C^* \times \{[1:0]\}\\
Y_1= \{x_1\} \times  \mathbb P^1\\
Y_2= \{x_2\} \times  \mathbb P^1 .
\end{array}
\right. 
$$

\subsection{Mirror symmetry for $\LG(2)$}
 We need to calculate  the category of $D$-branes  $DB(\mathcal Y_2,g)$ of
 the Landau--Ginzburg model $(\mathcal Y_2,g)$.
Given that  the critical fibre $Y$ has 3 irreducible components, 
following \cite[Example\thinspace 8.2]{KL} we conclude that there are 3 D-branes to be 
considered, $B_i$ corresponding to the components $Y_i$ for $i=0,1,2.$
However, there is a symmetry of the LG-model, given by $x \mapsto x^{-1}$ which 
interchanges $B_1$ and $B_2$ and therefore these are isomorphic in  $DB(\mathcal Y_2,g)$.
We conclude that the category $DB(\mathcal Y_2,g)$ is generated by 2 objects, namely the 2 branes 
$B_0$ and $B_1$. To compute the morphisms we will pass to the category 
$D_{sg}(Y)$ of singularities of $\mathcal Y_2$ which is equivalent to the category of the pair
$DB(\mathcal Y_2,g)$.

Let $\pi\colon \widetilde {\mathcal Y_2} \rightarrow \mathcal Y_2$ 
denote the smooth surface obtained by blowing up $\mathcal Y_2$ at the double points $p_i$ for $i=1,2$ 
with $E_i$ the corresponding  exceptional divisors, hence  $E_i\simeq \mathbb P^1$.
Denote by $\mathcal F$ the sheaf on $\widetilde {\mathcal Y_2}$ supported on  $E_1$  with rank 1 
and degree $-3$ on it, that is, it
satisfies
$\mathcal F\vert_{E_1} = \mathcal O_{E_1}(-3) \simeq \mathcal O_{\mathbb P^1}(-3)$   and it is   extended by zero
to the complement of $E_1$ in $\widetilde{\mathcal Y_2}$.

Let $\mathcal G$ denote the sheaf on $\widetilde{\mathcal Y_2}$ supported on $\pi^{-1}( Y_1)$ (more precisely on the proper transform 
$\widetilde{Y_1}$ of $Y_1$ in $\widetilde{S}$ and on the divisor $E_1$)
 and of degree $-1$  over each curve, that is 
 $\mathcal G\vert_{\widetilde{Y_1}}\simeq \mathcal O_{\mathbb P^1}(-1) \simeq \mathcal G\vert_{E_1}$
  supported on $\pi^{-1}(Y_1)$ and extended by zero to the complement of $\widetilde Y_1$ in $\widetilde {\mathcal Y_2}$.

\begin{lemma}\label{obs}Set $\mathcal L_0\ce \pi_*\mathcal F$ and 
$\mathcal L_1\ce \pi_*\mathcal G$.
The equivalence $DB(\mathcal Y_2,g)$ to $D_{sg}(Y)$ is given at the 
level of objects by $B_0 \mapsto \mathcal L_0$ and $B_1 \mapsto \mathcal L_1$.
\end{lemma}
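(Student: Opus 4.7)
The plan is to invoke the Kapustin--Li dictionary of \cite[Ex.~8.2]{KL} identifying $DB(S,y)$ with $D_{Sg}(S)$, under which a D-brane supported on a component $Y_i$ of the singular fibre $Y=\{y=0\}$ is sent to an explicit coherent sheaf built from $Y_i$ together with the local normal-crossing data at the nodes of $Y$. Since $S$ is smooth but $Y$ has nodes $p_1,p_2$, one naturally works on the resolution $\widetilde{S}$ and then pushes down (or equivalently views the result as a sheaf on $S$ supported on or over $Y$). The task then reduces to verifying that the specific twisted sheaves $\mathcal{L}_0$ and $\mathcal{L}_1$ in the statement are the images of $D_0$ and $D_1$.

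First I would treat $D_0\mapsto\mathcal{L}_0$. The component $Y_0=\mathbb{C}^*\times\{[1:0]\}$ passes through both nodes, so its brane must record crossing data at $p_1$ and $p_2$ simultaneously; this is exactly the role of a sheaf supported on $E=E_1\cup E_2$. Among sheaves of the form $\mathcal{F}(-kE)$, I expect $k=3$ to be forced by the decomposition $-3=-2-1$: the $-2$ is the canonical degree $\deg\omega_{\mathbb{P}^1}$, while the $-1$ is an adjunction correction accounting for the single proper transform of another fibre component meeting $E_i$ transversally inside $\widetilde{S}$. The derived pushforward $R\pi_*\mathcal{F}(-3E)$ then lands in the correct class in $D_{Sg}(S)$, and the identification with $D_0$ is verified by a local Koszul matrix factorization of $y$ at each node whose Orlov cokernel matches the stalk of $\mathcal{L}_0$.

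Next I would treat $D_1\mapsto\mathcal{L}_1$. Since $Y_1=\{x_1\}\times\mathbb{P}^1$ meets only $p_1$, only $E_1$ is involved, and $\mathcal{F}(-Y_1-E_1)$ is naturally read as the normal-crossing sheaf of type $(-1,-1)$ on the chain $\widetilde{Y}_1\cup E_1\subset\widetilde{S}$ (extended by zero and then viewed on $S$). The degree $-1$ on each piece is dictated by the same adjunction/duality calculation as in the previous step, applied now to a single node rather than two. The match with $D_1$ is again checked by the analogous local matrix-factorization computation at $p_1$.

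The main obstacle is pinning down the twist numbers $-3$ and $-1$, that is, showing these are exactly the integers for which the pushforward/extension sheaves represent the classes in $D_{Sg}(S)$ prescribed by the Kapustin--Li equivalence. Once the problem is localized at each node via the normal-crossing model $uy=v(x-x_i)$ (up to units), this becomes a careful but bounded check: write down the Koszul matrix factorization of $y$ coming from the brane, take its Orlov cokernel, and identify it with the local picture of $\mathcal{L}_0$ or $\mathcal{L}_1$. Combining these local identifications with the global definitions of $\mathcal{L}_0$ as a pushforward and $\mathcal{L}_1$ as an extension-by-zero sheaf then finishes the proof.
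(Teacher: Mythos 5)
Your overall strategy is sound but genuinely different from the paper's. You propose to run the Kapustin--Li dictionary locally at each node, writing down a Koszul matrix factorization of $y$ and matching its Orlov cokernel against the stalks of $\mathcal L_0$ and $\mathcal L_1$. The paper instead reduces everything to Lemma~\ref{genODP} in Appendix~\ref{nonperfect}: on the model ordinary double point $X_2$ (the contraction of $\Tot\mathcal O_{\mathbb P^1}(-2)$), the pushforward $\pi_*\mathcal O(-3)$ has an explicit infinite \emph{periodic} free resolution over $\widehat{\mathcal O}_0=\mathbb C[[x,y,w]]/(y^2-xw)$ (computed following \cite[Ex.~2.14]{BGK} via formal functions and \v{C}ech cohomology), hence is non-perfect, and Orlov's computation \cite[Sec.~3.3]{Or1} that $D_{Sg}(X_2)$ has a single generator then forces it to generate; the two nodes are tied together by the single component $Y_0$, giving $\mathcal L_0$, while for $\mathcal L_1$ the paper simply observes that $\mathcal L_1|_{Y_1}$ generates $D^b\Coh(Y_1)$ and the twist $-1$ records that $Y_1$ is an exceptional curve. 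The two routes are two faces of the same coin (periodic resolutions versus matrix factorizations, \`a la Eisenbud--Orlov), and yours has the virtue of being purely local and directly tied to the D-brane dictionary you are trying to realize; the paper's has the virtue that the key computation is already done elsewhere and only needs to be quoted.

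The one genuine soft spot in your write-up is the determination of the twist $-3$. Your heuristic ``$-3=-2-1$, canonical degree plus adjunction correction'' is an expectation, not an argument, and it is not obviously the right dichotomy: for the $A_1$ surface singularity the pushforwards $\pi_*\mathcal O(-1)$ and $\pi_*\mathcal O(-3)$ are both non-perfect and both generate $D_{Sg}$, so degree counting alone cannot single out $-3$; what pins it down is the explicit module structure (generators $x^2$, $xy$ with the periodic relation matrix $\left(\begin{smallmatrix} y & w\\ -x & -y\end{smallmatrix}\right)$, i.e.\ the matrix factorization of $y^2-xw$) as computed in the appendix. You correctly identify this verification as ``the main obstacle'' and describe the right mechanism for carrying it out, but you do not carry it out, so as written the proof of the $D_0\mapsto\mathcal L_0$ half is incomplete at exactly the point where the paper leans on Lemma~\ref{genODP}. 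If you replace the adjunction heuristic by the local resolution computation (or cite \cite[Ex.~2.14]{BGK} for it), your argument closes up and is essentially equivalent in content to the paper's.
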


\begin{proof} Lemma \ref{genODP} shows that the restriction of $\mathcal L_0$ generates $D_{sg}$ at $p_i$ for 
each of the singular points, and note that the 2 points  appear together  connected by $B_0$.
$\mathcal G$ generates the derived category of coherent sheaves on $Y_1$ and since the support of $\mathcal L_1$
contains the critical point $p_1$ it  is not a perfect sheaf on $\mathcal Y_2$, hence it is 
 nontrivial in $D_{sg}(Y)$.\end{proof}

\begin{lemma}\label{morfs} The morphisms are:
$$\left\{
\begin{array}{lll}
\Hom(\mathcal L_0,\mathcal L_0) & = \Hom(\mathcal L_1,\mathcal L_1)& =\mathbb Z\\
\Hom(\mathcal L_0,\mathcal L_1) &= \mathbb Z \oplus \mathbb Z[-1]\\
\Hom(\mathcal L_1,\mathcal L_0) & =0.
\end{array}
\right.$$
\end{lemma}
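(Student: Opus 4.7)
The plan is to compute each morphism space by lifting $\mathcal L_0$ and $\mathcal L_1$ to explicit sheaves on the resolution $\widetilde S$, computing $\Ext$-groups there, and then descending to $D_{Sg}(S)$ by quotienting out perfect complexes. The two singularities of $S$ are ordinary double points, so the geometry of $\widetilde S$ consists of the rational curves $Y_0, E_1, Y_1, E_2, Y_2$ meeting normally, with $Y_0 \cap E_i$ and $E_i \cap Y_i$ single points for $i=1,2$, and otherwise disjoint; this chain structure will drive all the calculations.

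For the diagonal Homs, I would use the previous lemma, which identifies $\mathcal L_0$ and $\mathcal L_1$ as generators of their respective summands of $D_{Sg}(S)$. At each ODP the local generator is simple with one-dimensional endomorphism ring; for $\mathcal L_0$ the two ODPs are tied together by $Y_0$ into a single indecomposable object, producing $\Hom(\mathcal L_0, \mathcal L_0) = \mathbb Z$. For $\mathcal L_1$, the twist $-Y_1 - E_1$ is precisely the one that collapses the higher self-Exts after descending to $D_{Sg}$, so that only the identity class survives and $\Hom(\mathcal L_1, \mathcal L_1) = \mathbb Z$.

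For $\Hom(\mathcal L_0, \mathcal L_1)$, I would use the local-to-global Ext spectral sequence on $\widetilde S$, exploiting that the supports of $\mathcal L_0$ and $\mathcal L_1$ both meet $E_1$. A degree-$0$ class comes from the natural evaluation morphism along $E_1$, and a degree-$1$ class arises from the nontrivial extension of $\mathcal L_1$ by $\mathcal L_0$ across $E_1$; all higher Ext classes either vanish on $\widetilde S$ or become perfect and die in $D_{Sg}(S)$, yielding the asserted $\mathbb Z \oplus \mathbb Z[-1]$. Conversely, for $\Hom(\mathcal L_1, \mathcal L_0)$, I would apply Serre duality on $\widetilde S$, after which the relevant groups become cohomology of line bundles on the exceptional and ruled components whose degrees, determined by the choice of twists, make every surviving class perfect and hence zero in $D_{Sg}(S)$.

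The main obstacle will be the bookkeeping of perfect complexes: pinning down exactly one surviving class in degree $1$ for $\Hom(\mathcal L_0, \mathcal L_1)$, while simultaneously killing every class in $\Hom(\mathcal L_1, \mathcal L_0)$, requires the explicit matrix-factorization description at each ODP and a careful identification of the perfect subcategory inside $D^b(\widetilde S)$. The asymmetry of the twists $-3E$ versus $-Y_1 - E_1$ is exactly what enforces the asymmetry of the two off-diagonal Homs, and verifying this matching is the delicate part of the argument.
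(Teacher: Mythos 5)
Your plan has a genuine structural gap at its first step, and it also diverges from the method the paper actually sets up. The paper states this lemma without a displayed proof; the machinery it provides for such computations is in the appendices: one represents each object by a module over the local ring of the singularity, writes down its explicit infinite $2$-periodic free resolution (Appendix A does this for $\mathcal L_0$ at an ordinary double point, with the periodic matrix $\left(\begin{smallmatrix} y & w\\ -x & -y\end{smallmatrix}\right)$; Appendix C does the analogous computation in the normal-crossing case), and then reads off the stable $\Ext$ groups from the cohomology of the resulting periodic $\Hom$ complex. That is the matrix-factorization computation you defer to the end as ``the delicate part''; in this approach it is not a final bookkeeping step but the entire content of the proof.

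The step that would fail is the passage through the resolution $\widetilde S$. Morphisms in $D_{Sg}(S)$ are morphisms in the Verdier quotient $D^b(\Coh S)/\mathrm{Perf}(S)$, computed by roofs whose left leg has perfect cone; they are \emph{not} obtained by computing $\Ext_{\widetilde S}$ between lifts of $\mathcal L_0$ and $\mathcal L_1$ and then ``discarding classes that become perfect.'' There is no exact functor $D^b(\widetilde S)\to D_{Sg}(S)$ implementing that recipe, and since $\pi\colon\widetilde S\to S$ is not flat, $\Ext_{\widetilde S}$ of chosen lifts need not agree with $\Ext_S$ of the pushforwards in the first place (this is exactly why Appendix A works on $S$ itself, via the Theorem on Formal Functions, rather than on the blow-up). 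For the same reason, Serre duality on the smooth surface $\widetilde S$ cannot be invoked to kill $\Hom(\mathcal L_1,\mathcal L_0)$: the duality relevant to $D_{Sg}$ of a Gorenstein surface singularity is the $2$-periodic one on stable $\Ext$, not coherent duality upstairs. To repair the argument, replace the local-to-global spectral sequence and Serre duality on $\widetilde S$ by the explicit periodic resolutions of the modules corresponding to $\mathcal L_0$ and $\mathcal L_1$ over $\hat{\mathcal O}_{S,x_i}$ and compute the cohomology of $\Hom(P_\bullet,-)$ directly, as in the paper's Appendices A and C; the asymmetry between the two off-diagonal $\Hom$'s then comes out of the asymmetry of the two resolutions, not out of a duality statement.
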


\begin{proof} 
Note that the supports of $\mathcal L_0$ and $\mathcal L_1$ intersect only at the point $p_1$,
 therefore to calculate their  morphisms it suffices to calculate morphisms between $\mathcal F$ and $\mathcal G$.
To calculate their morphisms, note that  $E_1$ is their common support  on $\widetilde {\mathcal Y_2}$ and therefore
their morphisms are trivial elsewhere. We have that
$\mathcal {\bf Hom}(\mathcal L_0,\mathcal L_1)$ 
come from their restriction to $E_1 $,
giving:

$
\mathbf{Hom}(  \mathcal F\vert_{E_1}, \mathcal G\vert_{E_1})= 
 \mathbf{Hom}( \mathcal O_{E_1}(-1) , \mathcal O_{E_1}(-3) )  = \Hom( \mathcal O_{E_1}(-1) , \mathcal O_{E_1}(-3)) \oplus $
 
$\Ext^1( \mathcal O_{E_1}(-1), \mathcal O(-3) ) =
H^0(\mathcal O_{\mathbb P^1}(-2)) \oplus H^1(\mathcal O_{\mathbb P^1}(-2))[-1] 
= \mathbb Z \oplus \mathbb Z [-1].$
\end{proof}

We have now obtained:

 \begin{theorem*}[\ref{mirror2}] Homological Mirror Symmetry for $LG(2)$ works as follows:
 \begin{itemize} 
 \item {\bf HMS 1} fails for $LG(2)$, that is, for any (quasi) projective variety $Y$:
 $$ \Fuk(\LG(2))\not\equiv D^b\!\!\Coh(Y).$$ 
\item {\bf HMS 2} holds true  for $LG(2)$, that is, 
 we have an equivalence of categories:
 $$\Fuk(\LG(2)) \equiv DB(\mathcal Y_2,g) \equiv D_{sg}(Y).$$
 \end{itemize}
 \end{theorem*}
 
 \begin{proof} The  statement about the nonexistence of  a suitable variety $Y$ 
 to provide the categorical equivalence between $ \Fuk(\LG(2))$ and  $D^b\!\!\Coh(Y)$
 is just a rephrasing of \cite[Thm.\thinspace4.1]{BBGGSM}.
 
 The equivalence between $\Fuk(\LG(2))$ and $D_{sg}(Y) $
is obtained from lemma \ref{obs} at  the level of objects and lemma \ref{morfs} at the level of  morphisms.
 \end{proof}

%
%
%

\begin{figure}
\centering
\begin{tikzpicture}
\shade[top color=Aquamarine, bottom color=Salmon] (4,2) circle (2cm);
\draw (4,2) ellipse (2cm and .5cm);
\draw (4,2) circle (2cm);
\draw[thick,->] (4,-.5) -- (4,-1.5) node[anchor=south west]{$y$};
\draw[thick,->] (6.5,2) -- (7.5,2)node[anchor=south east ]{$2x$};
\node at (4.1,4.23) {N};
\node at (4.1, -.2) {S};
\draw[fill] (4,4) circle (.05cm);
\draw[fill] (4,0) circle (.05cm);
\draw[fill] (8,4) circle (.05cm);
\draw[fill] (8,0) circle (.05cm);
\draw[fill] (4,-2) circle (.05cm);
\draw[fill] (8,0) -- (8,4);
\draw[fill] (2,-2) -- (6,-2);
\end{tikzpicture}
\caption{$\LG_2$ and its mirror}
\label{sphere}
\end{figure}

Thus, the derived category obtained in Lemma\thinspace\ref{morfs} 
indeed coincides with the Fukaya category $\Fuk(\LG(2))$ 
described in \cite[Thm.\thinspace 3.1]{BBGGSM}.
We observe that from the viewpoint of configuration of critical points the 
two LG models behave like as if $(\mathcal Y_2,g)$ had been obtained from
$\LG(2)$ by a $90^o$ rotation: the former configuration of 2 critical points on a single fibre
can be achieved by changing the potential on the latter to a direction perpendicular 
to the original one.

\section{The Landau--Ginzburg model $\LG(3)$}

The Landau--Ginzburg model  $\LG(3)$ for the minimal adjoint orbit $\mathcal O_3$ of $\mathfrak{sl}(3)$ 
is described in detail in \cite[Sec.\thinspace 4.1]{BGRSM}. 
The adjoint orbit
$\mathcal O_3 $ compactifies  to the product  $ \mathbb P^2\times \mathbb P^2$
where it is embedded as the open orbit of the diagonal action of $\mathrm{SL}(3,\mathbb C)$.
 This action has as a closed orbit the divisor $D_1= \mathbb F(1,2)$
and we have $ \mathbb P^2\times \mathbb P^2\setminus \mathbb F(1,2) \simeq \mathcal O_3 $. 

So, we study the pair $(X,D_1) = (\mathbb P^2\times \mathbb P^2, \mathbb F(1,2))$ with a potential $f_H$ 
obtained by  choosing a regular element $H=\Diag(\lambda_1,\lambda_2,\lambda_3)$.

The rational map $R_H$ extending   $f_H$ to $ \mathbb P^2\times \mathbb P^2$ is 
 described in \cite{BGGSM} as
  \begin{equation}\label{pot3}
R_H[(x_1:x_2:x_3),(y_1:y_2:y_3)]= 
\frac{\lambda_1 x_1y_1+\lambda_2x_2y_2+\lambda_3x_3y_3}{x_1y_1+x_2y_2+x_3y_3}\text{.}
\end{equation}
The denominator vanishes precisely over the flag manifold, that is, over
\begin{equation*} \label{flag} \mathbb F(1,2) = \{ [(x_1:x_2:x_3),(y_1:y_2:y_3)]  \in \mathbb P^2 \times  \mathbb P^2;  x_1y_1+x_2y_2+x_3y_3=0 \},\end{equation*}
  and the indeterminacy locus $\mathcal I$  of $R_H$, is the divisor  in $\mathbb F(1,2)$ cut out by 
  \begin{equation*} \label{ind} \mathcal I = \{ [(x_1:x_2:x_3),(y_1:y_2:y_3)]  \in \mathbb F(1,2);  \lambda_1x_1y_1+\lambda_2x_2y_2+\lambda_3x_3y_3=0 \}. \end{equation*}

\subsection{Monodromy and vanishing cycles}
The potential of $\LG(3)$ has 3 critical  points, happening at the 3 Weyl images of $\Diag(2,-1,-1)$. 
In the compactification they correspond to the 3 coordinate points $(e_i,e_i) \in \mathbb P^2\times \mathbb P^2$,
where $e_1= [1:0:0]$ etc. 
Since $\LG(3)$ is  a symplectic Lefschetz fibration, we know that the monodromy around each of the singular 
fibres is a classical Dehn twist. 

We calculate the possible monodromy matrices 
fitting into this situation when we choose a regular fibre $Y_b$. 
By \cite[Cor.\thinspace 3.4]{GGSM1} we know that $H_3(Y_b) = \mathbb Z \oplus \mathbb Z$.
We can write  down the middle homology 
of the regular fibre as being generated by the vanishing cycles represented in 
homology as  $\sigma_1 = (1,0)$ and $\sigma_2= (0,1)$. 

  By definition of 
Lefschetz fibration,  
around each critical point
we can choose coordinates to write 
 the potential  as 
$z_1^2+z_2^2+z_3^2+z_4^2$, and  each single monodromy 
 may be expressed up to a change of coordinates as the action of 
$T={\tiny  \left(\begin{matrix} 1 & 1 \\
                                             0 & 1
               \end{matrix}\right)},$ 
 with         the vanishing cycle 
being   fixed by the Dehn twist.
%
Given that the potential was compactified to a map onto $\mathbb P^1$, we are looking for 3 monodromy 
 matrices satisfying 
$T_3T_2T_1=I$, each representing a Dehn twist. 
In further generality this composition ought to equal the monodromy around the fibre at infinity;
however, in our case, as proved in \cite[Lem.\thinspace 17]{BGRSM},  there is no critical point at infinity.
Let
$T_1 \ce T$
 describe the  monodromy  around the first critical fibre,
thus  fixing  vectors of the form     $(a,0)$ in $H^3(Y_b)= \mathbb Z \oplus \mathbb Z$.             
     The natural choice for $T_2$ is a Dehn twist that fixes the vectors $(0,b)$,  represented as 
     $T_2= T_1^t$, the transpose of $T_1$.
Then $T_3 = (T_2T_1)^{-1}$
fixes only $(0,0)$, which is geometrically clear, since it must invert both
actions of $T_1$ and $T_2$. But the vanishing cycle corresponding to $T_3$ can not
be zero, so the only possibility left is that the vanishing cycle corresponding to $T_3$ occurs at infinity,
at the limit  of the unstable manifold corresponding to the third critical point in the open orbit 
$\mathcal O_3$. Hence, a vanishing cycle is  placed at the compactifying
divisor $D_1$.
We conclude that 2 of the critical points have their corresponding vanishing cycles on the regular fibre, 
whereas the third critical point corresponds to a vanishing cycle at infinity. We state this fact 
as a lemma. 

\begin{lemma}\label{vanishc3}
The vanishing cycles of $\LG(3)$ can be depicted as the 2 generators of the 
middle homology of the regular fibre, 
together with one sphere at infinity.  \end{lemma}

\begin{figure}
\includegraphics[height=6.6cm]{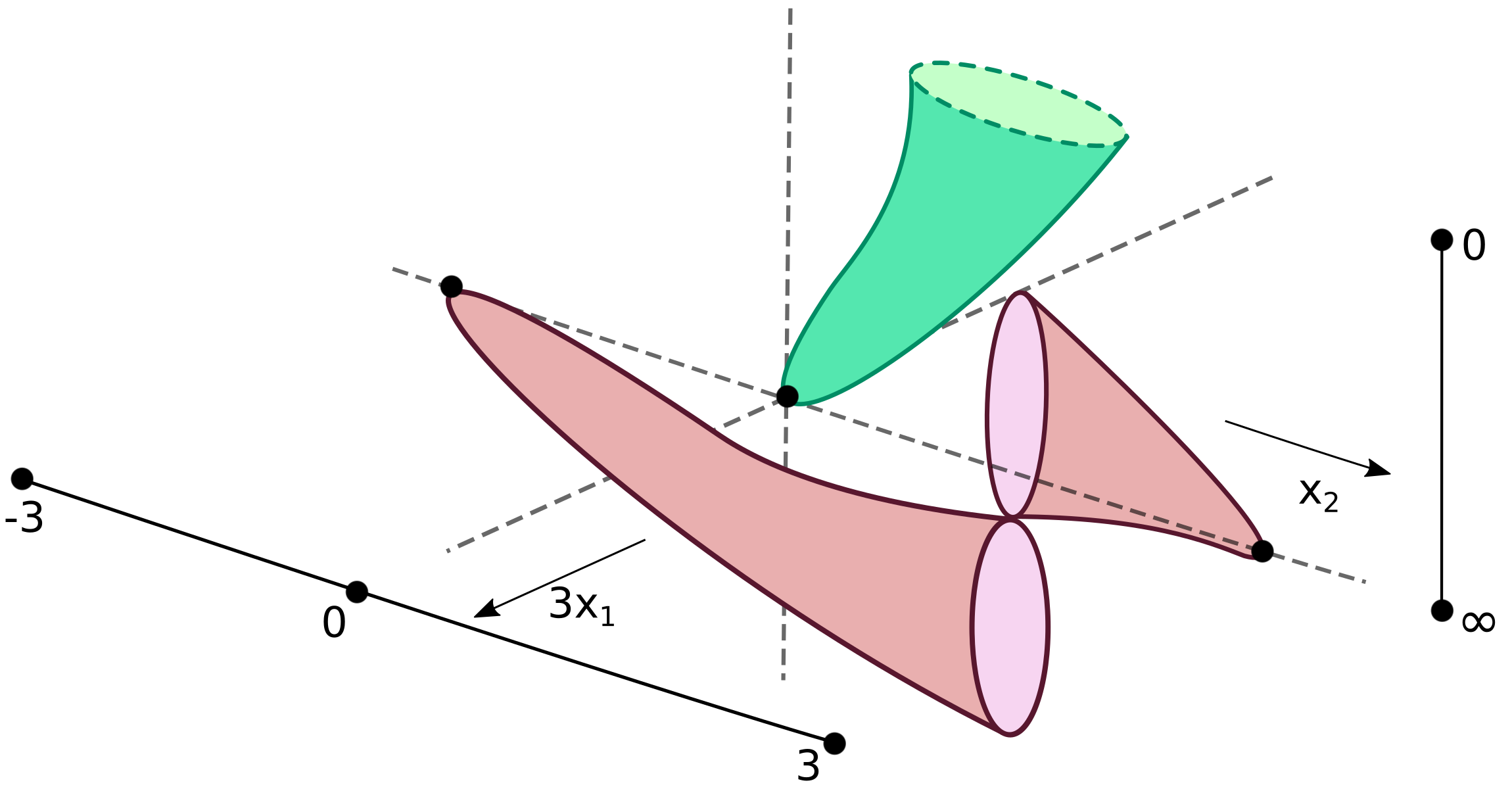}
\caption{$\LG(3)$ and its mirror}
\label{fig2}
\end{figure}

By the end of the following section, 
we will verify
that   the mirror  reverses this  behaviour,  having one critical point over the fibre 
at zero and 2 others at infinity.

\subsection{The intrinsic mirror of $\LG(3)$}\label{3}

The minimal adjoint $\mathcal O_3$ orbit is compactified to  $\mathbb P^2\times\mathbb P^2$ with
fibre  at infinity  $D_1 \simeq  \mathbb F(1,2)$ and  potential  of bidegree $(1,1)$ coming from \ref{pot3}.
We take a second divisor $D_2$ as another 
fibre of the potential, so that $D_1\cap D_2$ is the indeterminacy
locus. 
To get to the  log Calabi--Yau situation, we need additional divisors $D_3$, $D_4$
of bidegree $(1,0)$ and $(0,1)$ respectively, so that 
$D=\sum D_i$ is bidegree $(3,3)$, the anticanonical class. Their intersections satisfy:

$D_1\cap D_3$ and $D_2\cap D_3$  are of type $(1,1)$ in $\mathbb P^1\times \mathbb P^2$,

$D_1\cap D_4$ and $D_2\cap D_4$  are of type $(1,1)$ in $\mathbb P^2\times \mathbb P^1$,

$D_3\cap D_4 =  \mathbb P^1 \times \mathbb P^1$ and $D_1\cap D_2$  is a connected surface,

$D_1\cap D_3 \cap  D_4$ and $D_2\cap D_3\cap D_4$ are  of type $(1,1)$ in $\mathbb P^1\times \mathbb P^2$,

$D_1 \cap D_2 \cap D_3$ is an intersection of 2  type $(1,1)$ divisors in $\mathbb P^1\times \mathbb P^2$, 

 $D_1\cap D_2 \cap D_3 \cap D_4$ consists of 2 connected components  $(1,1) \cap (1,1)$ 
in $\mathbb P^1\times \mathbb P^1.$ 
This gives  a pillow-type  picture   similar to the one 
in  \cite[Example\thinspace2.10]{GS} and our argument follows 
along the same lines.
Using the Gross--Siebert recipe, we  introduce functions
corresponding to the barycenters of the two 4-dimensional cones. Writing 
$D=D_1+D_2+D_3+D_4$ and variables  
$x_1,x_2,x_3,x_4,y_1,y_2$, 
the corresponding equations for the mirror of $(\mathbb P^2\times\mathbb P^2, D) $
are:
\begin{align*}
x_1x_2x_3x_4= {} & y_1+y_2+ t^{(1,0)} x_3 + t^{(0,1)} x_4, \quad \text{and}\\
y_1y_2 = {}  & t^{(1,1)} x_3x_4,
\end{align*}
where $(1,0)$, $(0,1)$ and $(1,1)$ stand for homology classes of divisors  with these degrees. 
In affine space
of dimension 6, with coordinates $x_1,x_2,x_3,x_4,y_1,y_2$
over $\mathbb C[t_1,t_2]$, after calculation of the punctured Gromov--Witten invariants,  
 we obtain: 
\begin{align*}
x_1x_2x_3x_4= {} & y_1+y_2+ t_2 x_3 + t_1 x_4, \quad \text{and}\\
y_1y_2 = {}  & t_1t_2 x_3x_4,
\end{align*}
where $t_1$, $t_2$ may be set to  random values (we really only use here the fact that they are nonzero).
We take as our potential $g= x_2$, corresponding to the first extra divisor $D_2$ that we added.

\subsubsection{The mirror  variety as a hypersurface} 

At first, the ring defining our mirror variety is 
$R=\mathbb C[t_1,t_2][x_1,x_2,x_3,x_4,y_1,y_2]/<r_1,r_2>$ with relations  
$$
r_1\ce x_1x_2x_3x_4=  y_1+y_2+ t_2 x_3 + t_1 x_4, \quad r_2\ce y_1y_2 =  t_1t_2 x_3x_4.$$
Observing that $R$ is isomorphic to 
$\mathbb C[t_1,t_2][x_1,x_2,x_3,x_4,y_1]/<r>$ with the single relation 
$r\ce  y_1(x_1x_2x_3x_4-  y_1- t_2 x_3 - t_1 x_4) =  t_1t_2 x_3x_4,$
 our mirror candidate variety $\mathcal Y_3$  may described by a single polynomial equation
$$y_1x_1x_2x_3x_4-  t_1t_2 x_3x_4- y_1t_2 x_3 -y_1 t_1 x_4- y_1^2=0 .$$
Equivalently, we take
the variety  defined by
$$y_1x_1x_2x_3x_4= (y_1+t_1x_4)(y_1+t_2x_3),$$
and we  have chosen the potential
\begin{equation}\label{x2} x_2= \frac{t_1t_2 x_3x_4+ y_1t_2 x_3 +y_1 t_1 x_4+ y_1^2}{y_1x_1x_3x_4}.\end{equation}
In further generality, we could have considered $x_2+x_3+x_4$ corresponding to all 
divisors added after compactifying, in order to arrive at the log geometry setup, but we will see that our simpler choice is enough 
for our purposes of finding a mirror.

\begin{remark} Both choices of potential $x_2$ or $x_2+x_3+x_4$ 
lead to the same category of singularities. Indeed, suppose that instead of choosing the potential to be $x_2$ we 
  had chosen the potential to be 
$$\phi = x_2+x_3+x_4.$$
Then, using \eqref{x2} we could rewrite it as
$$\phi =  \frac{t_1t_2 x_3x_4+ y_1t_2 x_3 +y_1 t_1 x_4+ y_1^2}{y_1x_1x_3x_4} +x_3+x_4$$ 
or equivalently
$$\phi =  \frac{ (y_1+t_1x_4)(y_1+t_2x_3)+(x_3 + x_4)(y_1x_1x_3x_4)}{y_1x_1x_3x_4} .$$     
In order to homogenize  $\phi$ (keeping in mind the $t_i$'s are constants) 
   we take an extra variable $y_2$ and write
$$\phi =  \frac{ (y_1+t_1x_4)(y_1+t_2x_3)y_2^3+(x_3 + x_4)(y_1x_1x_3x_4)}{y_1x_1x_3x_4y_2}. $$     

We would then arrive at an expression analogous to the one in Lemma \ref{homog}, with $R=[\phi:1]$
in place of $[x_2:1]$. The corresponding rational map would have indeterminacy locus
  $$(y_1+t_1x_4)(y_1+t_2x_3)y_2^3+(x_3 + x_4)(y_1x_1x_3x_4) = 0 = y_1x_1x_3x_4y_2.$$  
Since  the r.h.s. must equal  zero, one sees clearly that there are 2 cases:

a)  $ y_1x_1x_3x_4= 0 $ which implies    $(y_1+t_1x_4)(y_1+t_2x_3)y_2^3 = 0 $. Therefore here 
we obtain exactly the same indeterminacy locus as in \eqref{indl}. 

b) $y_2=0$ which implies $(x_3 + x_4)(y_1x_1x_3x_4) = 0$ and either the second term on the l.h.s. is zero 
and we fall back on case a), or else $x_3+x_4=0$ and we are in the same situation as choosing just $x_2$ as 
the potential. 

\end{remark}

\subsubsection{Singularities of the  mirror $(\mathcal Y_3, x_2)$ and rational extension of the potential}

Since the   $t_i$'s are constants,
we discuss the polynomials  in $\mathbb C[x_1,x_2,x_3,x_4,y_1]$ giving the variety cut  $\mathcal Y_3\subset \mathbb C^5$ out by the equation 
$$y_1x_1x_2x_3x_4= (y_1+t_1x_4)(y_1+t_2x_3).$$

\begin{lemma}\label{sings3} The singularities of the variety $\mathcal Y_3$ 
occur when either 
$y_1=x_3=x_4=0$ or else 
$x_1=x_2=y_1+t_2x_3=y_1+t_1x_4=0.$
\end{lemma}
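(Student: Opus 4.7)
The plan is a direct computation via the Jacobian criterion, followed by a short case analysis on which coordinates vanish. Write
$$F = x_1 x_2 x_3 x_4 y_1 - (y_1 + t_1 x_4)(y_1 + t_2 x_3),$$
so that $Y = V(F)$ and the singular locus is cut out by $F = 0$ together with the five partial derivatives
\begin{align*}
F_{x_1} &= x_2 x_3 x_4 y_1, \\
F_{x_2} &= x_1 x_3 x_4 y_1, \\
F_{x_3} &= x_1 x_2 x_4 y_1 - t_2(y_1 + t_1 x_4), \\
F_{x_4} &= x_1 x_2 x_3 y_1 - t_1(y_1 + t_2 x_3), \\
F_{y_1} &= x_1 x_2 x_3 x_4 - (y_1 + t_2 x_3) - (y_1 + t_1 x_4).
\end{align*}

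First I would dispatch the case $y_1 = 0$. The first two partials then vanish automatically, while $F_{x_3} = -t_1 t_2 x_4$ and $F_{x_4} = -t_1 t_2 x_3$ force $x_3 = x_4 = 0$ (using $t_1, t_2 \neq 0$, which follows from $H$ being regular). On this locus $F_{y_1}$ and $F$ vanish identically, yielding the first singular component $\{y_1 = x_3 = x_4 = 0\}$.

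For $y_1 \neq 0$, the partials $F_{x_1}$ and $F_{x_2}$ force $x_2 x_3 x_4 = 0$ and $x_1 x_3 x_4 = 0$. A short check rules out $x_3 = 0$: it would make $F_{x_3} = -t_2(y_1 + t_1 x_4) = 0$ give $y_1 = -t_1 x_4$, but then $F_{x_4} = -t_1 y_1 \neq 0$. Symmetrically $x_4 \neq 0$, so we must have $x_1 x_2 = 0$. Substituting back, $F_{x_3} = 0$ and $F_{x_4} = 0$ reduce to $y_1 + t_1 x_4 = 0$ and $y_1 + t_2 x_3 = 0$, and one verifies that these relations automatically entail $F_{y_1} = 0$ and $F = y_1^2 - t_1 t_2 x_3 x_4 = 0$.

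The point that will take a little care is interpreting the second component in the statement, since the Jacobian analysis forces only the product $x_1 x_2$ to vanish; the full singular scheme consists of the two strata $\{x_1 = y_1 + t_1 x_4 = y_1 + t_2 x_3 = 0\}$ and $\{x_2 = y_1 + t_1 x_4 = y_1 + t_2 x_3 = 0\}$ meeting along the locus recorded in the lemma. I would present both strata explicitly and then specialize to their common intersection, which is the deepest stratum of the singular scheme and the one relevant for the subsequent analysis of $D_{Sg}(Y_n, g)$.
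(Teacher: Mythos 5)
Your main computation follows the paper's own route exactly: write down the five partials of $F$ and split into the cases $y_1=0$ and $y_1\neq 0$. The first case is handled correctly. In the second case your conclusion that $x_3\neq 0$ and $x_4\neq 0$ is right, but the intermediate step is garbled: setting $x_3=0$ does \emph{not} turn $F_{x_3}=x_1x_2x_4y_1-t_2(y_1+t_1x_4)$ into $-t_2(y_1+t_1x_4)$, since $F_{x_3}$ contains no $x_3$. The contradiction you want comes directly from $F_{x_4}=x_1x_2x_3y_1-t_1(y_1+t_2x_3)$, which at $x_3=0$ equals $-t_1y_1\neq 0$; no detour through $F_{x_3}$ is needed.

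The genuine problem is your final paragraph. With $y_1$, $x_3$, $x_4$ all nonzero, the equations $F_{x_1}=x_2x_3x_4y_1=0$ and $F_{x_2}=x_1x_3x_4y_1=0$ force $x_2=0$ and $x_1=0$ \emph{separately}, not merely the product $x_1x_2=0$. The two larger strata you propose to record are not contained in the singular locus: at a point with $x_1=0$, $x_2\neq 0$, $y_1+t_1x_4=y_1+t_2x_3=0$ and $y_1\neq 0$, one has $x_3=-y_1/t_2\neq 0$ and $x_4=-y_1/t_1\neq 0$, hence $F_{x_1}=x_2x_3x_4y_1\neq 0$ and the point is smooth. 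So the locus $\{x_1=x_2=y_1+t_2x_3=y_1+t_1x_4=0\}$ is the entire second component, exactly as the lemma states, and presenting the two one-sided strata as part of the singular scheme would have you asserting something false. Delete that paragraph and the argument is correct and essentially identical to the paper's (which lists the same partials and performs the same two-case split, only more tersely).
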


\begin{proof}
The variety is given by  $$p=-x_1x_2x_3x_4y_1+(y_1+t_1x_4)(y_1+t_2x_3)$$
and the partials are
$$ \left\{
\begin{array}{l}
\frac{\partial p}{\partial x_1} = -x_2x_3x_4y_1\\
\frac{\partial p}{\partial x_2} = -x_1x_3x_4y_1\\
\frac{\partial p}{\partial x_3} = -x_1x_2x_4y_1+t_2y_1+t_1t_2x_4\\
\frac{\partial p}{\partial x_4} = -x_1x_2x_3y_1+t_1y_1+t_1t_2x_3\\
\frac{\partial p}{\partial y_1} = -x_1x_2x_3x_4+2y_1+t_1x_4+t_2x_3.\\
\end{array}\right.$$
%
Vanishing of the equations occurs in 2 separate cases:

Case (1) $y_1=0$ which leads  to 
$ y_1=x_3=x_4=0.$

Case (2) $x_1=x_2=0$ which leads  to 
%
$ x_1=x_2=y_1+t_2x_3=y_1+t_1x_4=0.$
%
%
%
%
\end{proof}

Now, we study the potential determined by $x_2$ over $\mathcal Y_3$. 

\begin{lemma}\label{rational-potential3} Let $\mathcal Y_3'= \mathcal Y_3 \cap \{y_1\neq 0\}\cap \{x_1\neq 0\} \cap  \{x_3\neq 0\} \cap  \{x_4\neq 0\} $, then 
the potential may be regarded as a smooth map  $x_2\colon \mathcal Y_3'\rightarrow \mathbb C$ given as a quotient 
$$x_2= \frac{(y_1+t_1x_4)(y_1+t_2x_3)}{y_1x_1x_3x_4},$$
and the only finite critical value of the potential  $x_2\colon \mathcal Y_3'\rightarrow \mathbb C$ is $x_2=0$.
\end{lemma}

\begin{proof} 
If the left hand side of 
$y_1x_1x_2x_3x_4= (y_1+t_1x_4)(y_1+t_2x_3)$
is nonzero, then the equation defines a smooth variety, and since 
$x_1,x_3,x_4,y_1$ are all nonzero on $\mathcal Y_3'$ to obtain a finite critical value of the potential, we must have that $x_2=0$.
(This may also  be checked by standard calculations of the  partial derivatives.)
\end{proof}

Alternatively, we may regard $x_2$ defining as a rational map 
    $ \mathcal Y_3\dashrightarrow \mathbb P^1$, and homogenizing using an extra variable $y_2$ (the $t_i$'s are constants) 
we obtain the following expression:
 $$ R(x_1,x_2,x_3,x_4,y_1,y_2)=    [(t_1t_2 x_3x_4+ y_1  t_2 x_3 +y_1 t_1 x_4+ y_1^2)y_2^2:y_1x_1x_3x_4],$$
which can be viewed  as a rational map $\mathbb P^5 \dasharrow \mathbb P^1$.
When $y_1x_2x_3x_4\neq 0$, we have
 $$ R(x_1,x_2,x_3,x_4,y_1,y_2)=   \left[\frac{(y_1+t_1x_4)(y_1+t_2x_3)y_2^2}{y_1x_1x_3x_4}:1\right]=  \left[x_2:1\right],
$$
showing that $R$ is  indeed an extension of the potential $x_2$.
It is usual to call $\infty\ce [1:0]$ the point at infinity and refer to  the divisor $D_\infty= \{y_1x_1x_3x_4=0\}$ as the fibre at infinity. 
The indeterminacy locus of $R$ is
\begin{equation}\label{indl}\mathcal I = \{(y_1+t_1x_4)(y_1+t_2x_3)y_2^2=0=y_1x_1x_3x_4\} \subset D_\infty.\end{equation}

\subsubsection{The case $t_1=t_2=1$}
To match the notation of the general case in  section \ref{n},
 we rename $x_3= w$ and $x_4=z$ (in fact, it ought to be $w_1$ and $z_1$, but 
we omit the subscripts).
We restate the results of lemmas \ref{sings3} and \ref{rational-potential3} with  $t_1=t_2=1$:

\begin{lemma} \label{homog}
The singularities of the variety $\mathcal Y_3\ce \{x_1x_2y_1zw=(y_1+z)(y_1+w)\}$
occur when either 
$y_1=z=w=0$ or else 
$x_1=x_2=y_1+w=y_1+z=0.$
The potential $x_2$ may be regarded as a smooth map on  ${\mathcal Y}_3'$ given by
$x_2= \frac{ ( y_1 +z)(y_1  +w)}{x_1y_1zw}$, and 
the map
$$R (x_1,x_2,y_1,y_2,z,w)=      \left[\frac{(y_1 +z) (y_1 +w)y_2^2}{x_1y_1zw}:1\right]= [x_2:1]$$
 is a rational extension of $x_2$ to the compactification. 
  \end{lemma}
 Considering  $\mathbb P^5$
 with coordinates $([x_1:x_2:y_1:y_2:z:w])$,
we may write $\mathcal Y_3$ as the hypersurface  
(contained inside the affine chart $y_2=1$)
 cut out  by the equation
 $$x_1x_2y_1zw=(y_1+z)(y_1+w).$$
There are 2 special points of the potential:
\begin{itemize}
\item  $R=[0:1]$, we call this the zero of the potential $x_2$, 
 happening when $(y_1+z)(y_1+w)y_2^2=0$. Note that  $[0:1]$  is a critical value of the potential which 
contains singularities of the variety ${\mathcal Y}_3 $; and
\item $R=[1:0]$, we call this  the infinity of the potential $x_2$, that is, when  $x_1y_1zw=0$.
Note that $[1:0]$ also contains singularities of ${\mathcal Y}_3$.
\end{itemize}

Now, to study  finite values of the potential,
the value of $x_1$ does not matter at all, as long as it is nonzero. To study the fibre  
 at infinity, we either have $x_1=0$ or else, once again 
 any nonzero value of $x_1$ has no effect. Hence, $x_1$ is only a placeholder 
 marking the fibre at infinity, while $x_2$ is only used to name the map, therefore 
 we may look at the restricted problem of a rational map $\mathbb P^3 \dashrightarrow \mathbb P^1$
$$R'=  [ (y_1 +z)(y_1  + w)y_2: y_1zw]$$ 
with corresponding indeterminacy locus 
$$\mathcal I' = \{ (y_1+z)(y_1+w)y_2=0=y_1zw\}.$$      
We will see that this dimensionally reduced problem gives our desired mirror.

\subsubsection{Extension of the potential to the compactification}
The next step is to consider a blow-up of 
$\mathbb P^3$ along the indeterminacy locus $\mathcal I'$ of the map $R'$.
We  denote by   
${\mathcal Z}_3\ce Bl_{\mathcal I'}(\mathbb P^3)$, the result of this blowing up,
obtained as follows.

Set  $f=(y_1+z)(y_1+w)y_2$ and $g=y_1zw$.
Take $\mathbb P^1$ with  homogeneous coordinates $[t:s]$.  
The pencil $\{tg+sf\}_{t,s \in\mathbb C}$ with base locus $\mathcal I'$ 
induces the rational map $R'\colon \mathbb P^3 \dashrightarrow \mathbb P^1$. 
We denote by ${\mathcal Z}_3$ the
closure of  the graph of the blow-up 
map, so that
\[
\begin{tikzcd}[swap]
       &  {\mathcal Z}_3 \arrow{d}[right]{\pi}\\
	{\mathcal Y}_3 \arrow[hook]{r}[above]{i} \arrow[dashed]{ur}
	\arrow[dashed]{dr}{[x_2:1]}
	& \mathbb{P}^3\arrow[dashed]{d}[right]{R'} \\  
		& \mathbb{P}^1
\end{tikzcd}
\]
and we get a map 
\begin{eqnarray*}
W=  R' \circ \pi \colon {\mathcal Z}_3 \subset  \mathbb P^3\times \mathbb P^1 & \rightarrow &\mathbb P^1\\
(  y_1: y_2:z:w: t: s)&\mapsto&[t:s].
\end{eqnarray*}
Note that if $s\neq 0$ then 
$[t:s]=\left[\frac{t}{s}:1\right]=\left[\frac{f}{g}:1\right]= \left[f:g\right]$
where the middle equality holds since $tg=sf_H$. 
Thus, we have obtain an extension of $f/g$ to the compactification.
We call $\infty\ce [1:0]$ the point at infinity, and refer to  the divisor $D_\infty= \{y_1zw=0\}$ as the fibre at infinity. 

%

\begin{lemma} The potential $W$ has 2 critical values  $[0:1]$ and  $ [1:0]$.
We refer to them  as zero and infinity respectively.
\end{lemma}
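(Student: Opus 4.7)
The plan is to view $W\colon Z \to \mathbb{P}^1$ as the morphism resolving the pencil $\{s_0 f - t_0 g\}_{[t_0:s_0]\in \mathbb{P}^1}$ on $\mathbb{P}^3$, where $f=(y_1+z)(y_1+w)y_2$ and $g=y_1 z w$. A value $[t_0:s_0]$ is critical for $W$ exactly when the strict transform in $Z$ of the corresponding pencil member $\{s_0 f = t_0 g\}$ is singular, so the argument splits into checking the two claimed values and ruling out all others.

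The two claimed critical values should be immediate. Over $[0:1]$, the fiber is the strict transform of $\{f=0\}$, a union of three hyperplanes meeting along coordinate lines; its one-dimensional singular locus is not entirely contained in $\mathcal{I}'$, so the strict transform in $Z$ remains singular. Symmetrically, the fiber over $[1:0]$ contains the strict transform of $\{g=0\}=\{y_1 z w = 0\}$, another union of three coordinate hyperplanes, whence $[1:0]$ is critical. Note that the case $[0:1]$ also matches the previous lemma, which already identified $x_2=0$ as the unique finite critical value of the potential on $Y'$.

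For a generic $\lambda = t_0/s_0 \in \mathbb{C}^*$, I would compute the four partial derivatives of $F_\lambda = (y_1+z)(y_1+w)y_2 - \lambda y_1 z w$ and show that every singular point of $F_\lambda$ in $\mathbb{P}^3$ lies in the base locus $\mathcal{I}'$. Beginning with $\partial_{y_2} F_\lambda = (y_1+z)(y_1+w) = 0$ to force one of the factors to vanish, and then, in the branch $y_1+z=0$, using $\partial_w F_\lambda = \lambda y_1^2 = 0$ with $\lambda\neq 0$ to conclude $y_1=z=0$, the partial derivatives $\partial_{y_1}F_\lambda$ and $\partial_z F_\lambda$ both reduce to $y_2 w = 0$. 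The symmetric case $y_1+w=0$ is analogous. Altogether the only candidates for singular points of $F_\lambda$ are the three coordinate points $[0:1:0:0]$, $[0:0:1:0]$, $[0:0:0:1]$, each of which lies in $\mathcal{I}'$ and is therefore resolved by the blow-up $\pi$.

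The main obstacle is the local analysis at these three coordinate points: one must confirm that the blow-up $\pi\colon Z \to \mathbb{P}^3$ actually separates the tangent directions of $f=0$ and $g=0$ enough to resolve the singularities of every pencil member, so that the strict transform $\widetilde{F}_\lambda$ is smooth even over $\pi^{-1}(\mathcal{I}')$ rather than merely on its complement. Because $f$ and $g$ are products of monomials with essentially independent supports near each coordinate vertex, this step reduces to a toric-style check in local charts, after which the strict transform of $F_\lambda$ is smooth throughout $Z$ for every $\lambda \in \mathbb{C}^*$ and the lemma follows.
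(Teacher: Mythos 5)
The paper states this lemma with no proof at all --- the nearest thing it offers is the preceding lemma identifying $x_2=0$ as the only finite critical value on $Y'$, plus the later inspection of the fibres over $0$ and $\infty$ --- so your proposal is supplying an argument the paper omits, and most of it is sound. The two positive checks are fine: the fibres over $[0{:}1]$ and $[1{:}0]$ are the unions of planes $\{f=0\}$ and $\{g=0\}$, each singular along a line not contained in $\mathcal I'$ (for $\{g=0\}$ note that two of its three double lines, e.g.\ $\{y_1=z=0\}$, \emph{do} lie in $\mathcal I'$, and only $\{z=w=0\}$ survives, so this deserves a sentence). Your computation of the singular locus of a generic pencil member $F_\lambda$ is also correct: for $\lambda\in\mathbb C^*$ the only singular points are the three coordinate points, and they all lie in $\mathcal I'$.

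The genuine gap is the step you defer as ``a toric-style check,'' which as formulated is false rather than merely unfinished. Since $f$ and $g$ have no common factor, the closure of the graph is the irreducible hypersurface $Z=\{tg=sf\}\subset\mathbb P^3\times\mathbb P^1$, so $W^{-1}([\lambda{:}1])=Z\cap(\mathbb P^3\times\{[\lambda{:}1]\})$ is carried \emph{isomorphically} by $\pi$ onto the pencil member $F_\lambda$ itself, nodes included: over each coordinate point the exceptional fibre of $\pi$ is a whole $\mathbb P^1$ on which $W$ is injective, so each fibre of $W$ meets it in exactly one point and stays singular there. Concretely, near $[0{:}0{:}0{:}1]$ in the charts $w=1$, $s=1$ the fibre is $\{\lambda y_1z=(y_1+z)(y_1+1)y_2\}$, an ordinary double point at the origin for every $\lambda\neq0$; the blow-up does not smooth it. What rescues the lemma is that all partials of $tg-sf$ vanish along the three exceptional $\mathbb P^1$'s over the coordinate points, i.e.\ these points are singularities of the total space $Z$ (and lie outside the locus $Y'$ where all coordinates are nonzero), not critical points of $W$ on the smooth locus. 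So you should replace your final paragraph by: away from $\pi^{-1}(\mathcal I')$ the map $\pi$ is an isomorphism and a fibre is singular at a smooth point of $Z$ exactly when $dW$ vanishes there; hence your computation shows the critical values of $W$ on $Z\setminus\mathrm{Sing}(Z)$ are precisely $[0{:}1]$ and $[1{:}0]$, which is the only defensible reading of the statement.
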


We observe that even though here the variety $\mathcal Z_3$ contains singular points, we do not 
consider them individually. Instead, we just take into account 
 the critical fibres of the potential, and then we verify that this choice produces the correct category of singularities
for the mirror.

\subsection{Category of singularities of $\LG^\vee\!(3)$}

Considering $ \mathbb P^3 \times \mathbb P^1$ 
with coordinates $ [y_1: y_2:z:w], [t: s]$, we now analyse the critical fibres of the potential $W$ over  ${\mathcal Z}_3$.
 

\subsubsection{The fibre over $0$ contributes a single object to $D_{sg}(\mathcal Y_3)$}
The critical fibre at $0$ happens when $[t:s]=[0:1]=[f:g]$, that is, when 
$[(y_1 +z) (y_1 +w)y_2:y_1zw]=[0:1]$. 

If $y_2=0$ the equality is satisfied at all points of a  divisor $D_{y_2} = {\mathcal Z}_3 \cap \mathbb P^2\times \mathbb P^1$,
and its contribution to the category of singularities of the compactified model would give us one object, 
which comes from the sheaf $\mathcal F(y_2)$ supported at the points of 
intersection $D_{y_2} \cap \{D_{z_0} +D_{w_0}\}$, where $D_{z_0}$ and $D_{w_0}$ are the divisors that  appear below. 
However,  we are interested only in the critical points of the potential in the mirror variety ${\mathcal Y}_3$ which lives
inside the affine chart $y_2=1$, so that we ignore this object living outside ${\mathcal Y}_3$.

Suppose now $y_2=1$, then we obtain the subset of the fiber over zero  cut out by equations
$(y_1 +z) (y_1 +w)=0$ with singularity in codimension 2 at the
intersection of the 2 divisors $D_{z_0} $ given by $y_1=-z$ and $D_{w_0}$ given by $y_1=-w$.
So, we obtain 2 nonperfect sheaves on $W^{-1}([0:1])$, these are the sheaves  
$\mathcal F(z_0)= {i_z}_*\mathcal O_{D_{z_0}}(1)$
and $\mathcal F(w_0)= {i_w}_*\mathcal O_{D_{w_0}}(1)$ supported 
on $D_{z_0}$ and $D_{w_0}$ included in $W^{-1}([0:1])\cap \mathcal Y_3$
by $i_z$ and $i_w$  respectively.
However, since there is a clear symmetry of this fibre given by exchanging $z$ and $w$ the sheaves
$\mathcal F(z_0)$ and $\mathcal F(w_0)$ are quasi-isomorphic, so
we obtain a unique object of $D_{sg}(W)$, say $\mathcal F(z_0)$ coming from the fibre over $0$.

\subsubsection{The fibre over $\infty$ contributes 2 objects to $D_{sg}(\mathcal Y_3)$}
The critical fibre over  $\infty$ happens when $[t:s]=[1:0]=[f:g]$, that is, when 
$[(y_1 +z) (y_1 +w)y_2:y_1zw]=[1:0]$.
Here we may  choose 3 extra sheaves depending on the 3 variables making $y_1zw=0$, once again, 
there is a clear symmetry of the space taking $z$ to $w$ so that we are left with 2  
new objects $\mathcal F(y_1)$ and $\mathcal F(z)$;
these are rank 1 sheaves supported on $y_1=0$ and $z=0$ respectively.

\begin{lemma}
The category of singularities of $(\mathcal Y_3)$ is generated by 3 objects; namely 
one sheaf supported on the fibre over zero, and 2 sheaves supported on the fibre over infinity.
\end{lemma}

\begin{proof}
The generators are the sheaves $\mathcal F(z_0)$, $\mathcal F(y_1)$ and $\mathcal F(z)$ just defined. 
\end{proof}
         
\section{The Landau--Ginzburg model  $\LG(n+1)$}\label{n}

$\LG(n+1)= (\mathcal O_{n+1},f_H),$ embeds in $\mathbb P^n \times \mathbb P^n$ as the open orbit of the 
diagonal action of $\mathrm{SL}(n+1)$ with complement the diagonal, that is, 
 the flag of lines and $n$-planes in $\mathbb C^{n+1}$. 
 
 \subsection{$\LG(n+1)$ fitted into the log geometry}
We have   $\mathcal O_{n+1} \subset  \mathbb P^n \times \mathbb P^n$ 
with compactifying divisor $\mathbb F(1,n)$.
 We set   $D_1\ce \mathbb F(1,n)$, 
 and chose  a second divisor $D_2$ of type (1,1).
We now describe the intrinsic mirror,  generalising section \ref{3}.

 We write the components of the boundary
divisor as:
$D_1, D_2$   of bidegree (1,1);
$E_1,...,E_{n-1}$ of bidegree (1,0); and
$F_1,...,F_{n-1}$ of bidegree (0,1).
Then we take variables $x_1,x_2$ corresponding to $D_1, D_2$;
$z_1,\dots, z_{n-1}$ corresponding to  $E_1,...,E_{n-1}$; and
$w_1, \dots, w_{n-1}$ corresponding to $F_1,...,F_{n-1}$.

 As in the $n=2$ case, we also need to take additional
variables $y_1,y_2$ which come from the theta functions corresponding to the
contact order information of meeting all divisors transversally; there
are two zero-dimensional strata where this can happen.

\subsection{The intrinsic mirror of $\LG(n+1)$}
The equations coming from these two zero-dimensional strata are
\begin{align*}
x_1x_2 \prod (w_iz_i) {}&= y_1+y_2+t_1 \prod w_i + t_2 \prod z_i\\
y_1y_2 {}&= t_1t_2 \prod(w_iz_i).
\end{align*}

The first equation gives 
$$y_2= x_1x_2 \prod (w_iz_i) - y_1-t_1 \prod w_i - t_2 \prod z_i,$$
and plugging this into the second equation leads to:
$$y_1(x_1x_2 \prod (w_iz_i) - y_1-t_1 \prod w_i - t_2 \prod z_i) = t_1t_2 \prod(w_iz_i).$$
So,  we obtain the affine variety  in  $\mathbb C[x_1,x_2,w_1,...,w_{n-1}, z_1,...,z_{n-1},y_1]$ cut out  by the single equation:
$$y_1^2 + y_1(t_1 \prod w_i + t_2 \prod z_i-x_1x_2 \prod (w_iz_i)) + t_1t_2 \prod(w_iz_i)=0,$$
or equivalently, we have the variety $\mathcal Y_{n+1}$ defined by:
\begin{equation}\label{mirror-recipe}
(y_1 + t_1 \prod w_i) (y_1 + t_2 \prod z_i) =y_1x_1x_2 \prod (w_iz_i).
\end{equation}

Over the variety  $\mathcal Y_{n+1}$, we choose the potential $g=x_2$ corresponding to $D_2$, the first additional  
divisor which was 
added for adapting  the pair $(\mathbb P^n\times \mathbb P^n,\mathbb F(1,n) )$    to the log geometry setup. 
The Landau--Ginzburg model formed by the variety $\mathcal Y_{n+1}$  together with the potential $x_2$ will
give rise to an extended Landau--Ginzburg model $({\mathcal Z}_{n+1},W)$ which we denoted by 
$$\LG^\vee\!(n+1)\ce (\mathcal Y_{n+1}, g=x_2)$$
and will turn out to be our desired mirror (although we only prove this at the level of objects).

\begin{example} The mirror  of $\LG(3)$ is obtained from the general formula \ref{mirror-recipe}
by making $i=1$ so that $w=w_1=x_3$ and $z=z_1=x_4$.
\end{example}

We now must extend the potential to the compactification. As a first step we construct a rational extension. We have
$$g\ce x_2= \frac{(y_1 + t_1 \prod w_i) (y_1 + t_2 \prod z_i)}{ y_1x_1 \prod (w_iz_i)}.$$

Since $x_1\neq 0$ at finite values of the expression for $x_2$, we may reduce the problem to
looking at $x_2$ as  the map
$$x_2= \frac{(y_1 + t_1 \prod w_i) (y_1 + t_2 \prod z_i)}{ y_1 \prod (w_iz_i)},$$
  and then, as a computational tool, we homogenise,  adding an extra variable $y_2$, 
  thus obtaining
  $$\frac{(y_1y_2^{n-1} + t_1 \prod w_i) (y_1y_2^{n-1} + t_2 \prod z_i)}{ y_1 \prod (w_iz_i)}.$$
  
  Equivalently,  
  we now have the rational map on $\mathbb P^{2n-1}$ written with 
  variables $[y_1:y_2:z_1:z_2:\dots:z_{n-1}:w_1:w_2:\dots:w_{n-1}]$ 
  defined by:
  $$R'= [(y_1y_2^{n-1} + t_1 \prod w_i) (y_1y_2^{n-1}+ t_2 \prod z_i)y_2: y_1 \prod (w_iz_i)].$$
  
  \begin{lemma} The rational map $R'\colon \mathbb P^{2n-1}\rightarrow \mathbb P^1$ is an extension of the potential $g\colon \mathcal Y_{n+1} \rightarrow \mathbb C$.
  \end{lemma}
  
 \begin{proof}
 $R'=[x_2:1]$ in the affine chart $y_2=1$, therefore $R'$ is an extension of the potential $g=x_2$ defined over $\mathcal Y_{n+1}$.
 \end{proof}
 
    The map $R'$ has as indeterminacy locus
  $$\mathcal I' = \{(y_1y_2^{n-2} + t_1 \prod w_i) (y_1y_2^{n-2}+ t_2 \prod z_i)y_2= y_1 \prod (w_iz_i)=0\}.$$

\subsubsection{Extension of the potential to the compactification}
The next step is to consider a  blow-up of 
$\mathbb P^{2n-1}$ along the indeterminacy locus $\mathcal I'$ of the map $R'$.
We  denote by   
$\mathcal Z_{n+1}\ce Bl_{\mathcal I'}(\mathbb P^{2n-1})$, the result of this blowing up,
obtained as follows.

Set  $f_1=[(y_1y_2^{n-2} + t_1 \prod w_i) (y_1y_2^{n-2}+ t_2 \prod z_i)y_2$ and $f_2=y_1 \prod (w_iz_i)$.
Take $\mathbb P^1$ with  homogeneous coordinates $[t,s]$.  
The pencil $\{tf_2+sf_1\}_{t,s \in\mathbb C}$ with base locus $\mathcal I'$ 
induces the rational map $R'\colon \mathbb P^{2n-1} \dashrightarrow \mathbb P^1$. 
Call ${\mathcal Z}_{n+1}$ the
closure of  the graph of the blow-up 
map, so that
\[
\begin{tikzcd}[swap]
       &  {\mathcal Z}_{n+1} \arrow{d}[right]{\pi}\\
	{\mathcal Y}_{n+1} \arrow[hook]{r}[above]{i} \arrow[dashed]{ur}
	\arrow[dashed]{dr}{[x_2:1]}
	& \mathbb{P}^{2n-1}\arrow[dashed]{d}[right]{R'} \\  
		& \mathbb{P}^1
\end{tikzcd}
\]
and we get a map 
\begin{eqnarray*}
W=  R' \circ \pi \colon Z \subset  \mathbb P^{2n-1}\times \mathbb P^1 & \rightarrow &\mathbb P^1\\
(  y_1: y_2:z_1:\dots :z_{n-1}:w_1:\dots: w_{n-1}: t: s)&\mapsto&[t:s].
\end{eqnarray*}
Note that if $s\neq 0$ then 
$[t:s]=\left[\frac{t}{s}:1\right]=\left[\frac{f_1}{f_2}:1\right]= \left[f_1:f_2\right]$
where the middle equality holds since $tf_2=sf_1$. 
We call $\infty\ce [1:0]$ the point at infinity, and refer to  the divisor $D_\infty= \{y_1\prod (w_iz_i)=0\}$ as the fibre at infinity.

Thus, we obtain the desired holomorphic extension:

\begin{lemma}\label{tame}
The map $W\colon {\mathcal Z}_{n+1} \rightarrow \mathbb P^1$ is a holomorphic  extension of $R'$. 
 The critical points of $W$ on ${\mathcal Y}_{n+1}' $ come from  critical points of $R$ on ${\mathcal Y}_{n+1}$. 
 \end{lemma}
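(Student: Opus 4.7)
The plan is to establish the two parts of the lemma separately, both following fairly directly from the graph-closure construction of $Z$.

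For holomorphicity, I would observe that $W$ is by definition the composition of the inclusion $Z \hookrightarrow \mathbb P^{2n-1}\times \mathbb P^1$ with the second projection, which is automatically a morphism of projective varieties. The only point to verify is that this morphism is defined on all of $Z$. I would split into two cases: away from $\mathcal I'$ the map $\pi$ is a biholomorphism and the defining relation $tg = sf$ of the graph forces $[t:s] = [f(p):g(p)]$, so $W$ agrees with $R'\circ \pi^{-1}$ and extends $R'$; while over a point $p \in \mathcal I'$ the relation $tg = sf$ becomes vacuous, and the second projection is tautologically defined on all of $\{p\} \times \mathbb P^1$, so $W$ continues to be holomorphic across the base locus. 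This is the standard mechanism by which blowing up the base locus of a pencil resolves the indeterminacy.

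For the critical-point assertion, the decisive step is to establish the disjointness $Y' \cap \mathcal I' = \emptyset$. Since $Y'$ is the subset of $Y$ on which $y_1$ and all the $z_i, w_i$ are nonzero, the polynomial $g = y_1 \prod (w_i z_i)$ is nowhere zero on $Y'$, placing $Y'$ entirely outside the base locus of the pencil. I would then deduce that $\pi$ restricts to a biholomorphism from $\pi^{-1}(Y')$ onto $Y'$ under which $W$ is identified with $R'$. Combining this with the earlier observation that $R'|_{Y'}$ agrees with the potential $x_2$ on $Y$ (the auxiliary variable $x_1$ serving only as a placeholder at finite values), critical points of $W$ on $\pi^{-1}(Y')$ correspond bijectively to critical points of $R$ on $Y'$.

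The main obstacle, if any, is the bookkeeping between the variety $Y$, which lives in a projective space containing $x_1, x_2$, and the reduced picture on $\mathbb P^{2n-1}$ on which $R'$ and $Z$ are built. To handle this cleanly I would make explicit that at any finite critical point we have $x_1 \neq 0$, so the differential conditions for criticality reduce to partials in the remaining variables $y_1, z_i, w_i$, and these coincide with the equations cutting out the critical locus of $R'$ on the image of $Y'$ in $\mathbb P^{2n-1}$. Once this identification is in place, the rest is a formal consequence of the universal property of the graph closure.
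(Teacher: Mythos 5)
Your argument is correct and follows essentially the same route as the paper, which in fact states this lemma without a separate proof, treating it as immediate from the graph-closure construction you spell out: $W$ is holomorphic because it is the restriction to $Z$ of the second projection, and over $Y'$ the function $g=y_1\prod(w_iz_i)$ is nonvanishing, so $Y'$ misses the base locus $\mathcal I'$, $\pi$ is an isomorphism there, and $W$ is identified with $R'=[x_2:1]$, giving the correspondence of critical points. The only imprecision is your assertion that the fiber of $Z$ over a point $p\in\mathcal I'$ is all of $\{p\}\times\mathbb P^1$ (in general it is only a closed subvariety of it), but this does not affect the conclusion since the projection is defined on the entire product.
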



\begin{lemma} The potential $W$ has 2 critical values  $[0:1]$ and  $ [1:0]$.
We refer to them informally as zero and infinity respectively.
\end{lemma}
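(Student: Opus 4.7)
The plan is to mirror the two-step analysis carried out in the $n=2$ case (the Lemma following Lemma \ref{rational-potential3}): first identify finite critical values of the potential on the smooth locus of $Y$, then account for the fiber at infinity after passing to the compactification $Z$. Throughout I use that, by construction, fibers of $W$ correspond to members of the pencil $\{tg+sf\}$ on $\mathbb{P}^{2n-1}$ pulled back to $Z$, so the critical values of $W$ are precisely the $[t:s]\in\mathbb{P}^1$ for which the proper transform of $\{tg+sf=0\}\cap Y$ acquires a singularity.

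First, on the open set $Y'=Y\cap\{y_1x_1\prod(w_iz_i)\neq 0\}$, the defining equation $(y_1+t_1\prod w_i)(y_1+t_2\prod z_i)=y_1x_1x_2\prod(w_iz_i)$ can be solved uniquely for $x_2$ as a rational function of the remaining coordinates, so $x_2\colon Y'\to\mathbb{C}$ is smooth and has no critical points. Consequently every critical point of $W$ must lie outside $Y'$, in the locus where either the numerator or the denominator of the expression for $x_2$ vanishes.

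Next I would analyse the pencil directly. For a finite value $c\neq 0$, the fiber $W^{-1}([c:1])$ corresponds on $\mathbb{P}^{2n-1}$ to the zero locus of $f-cg$; for generic $c$ this polynomial is irreducible and defines a smooth hypersurface, and the calculation of Lemma \ref{sings3} generalises to show that intersecting with $Y$ does not introduce new singularities away from $\{fg=0\}$. The only reducible members of the pencil are $f=0$, which splits as the three factors $(y_1y_2^{n-2}+t_1\prod w_i)$, $(y_1y_2^{n-2}+t_2\prod z_i)$ and $y_2$, and $g=0$, which is the union $D_\infty$ of the $2n-1$ coordinate hyperplanes. Both are manifestly singular along the pairwise intersections of their irreducible components, so $[0:1]$ and $[1:0]$ are indeed critical values of $W$.

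Finally, to rule out further critical values I would use that the small blow-up $\pi\colon Z\to\mathbb{P}^{2n-1}$ is an isomorphism away from the base locus $\mathcal{I}'\subset\{f=g=0\}$, so for $[t:s]\notin\{[0:1],[1:0]\}$ the fiber of $W$ over $[t:s]$ is the proper transform of a smooth member of the pencil and is itself smooth. The main obstacle is verifying that the blow-up contributes no new singularities along its exceptional divisors; this requires a local-coordinate analysis at each stratum of $\mathcal{I}'$, but since $\mathcal{I}'$ is by definition contained in $\{fg=0\}$ any such exceptional locus maps into $\{[0:1],[1:0]\}$ under $W$, so the bookkeeping reduces to a flatness and transversality check over the complement. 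This is the technical core of the argument, but it is routine once the pencil structure is in place.
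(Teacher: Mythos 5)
The paper itself states this lemma without proof, so the only internal evidence to compare against is the one-line argument given for the $\mathrm{LG}(3)$ analogue (the only finite critical value is $0$ because on $Y'$ the quantities $x_1,y_1,\prod w_i,\prod z_i$ are all nonzero, so the left-hand side of the defining equation vanishes only when $x_2=0$) together with the explicit description of the fibres over $0$ and $\infty$ in subsections \ref{fibre0} and \ref{fibreinf}. Your overall architecture --- singular fibres exactly at $[0:1]$ and $[1:0]$, smoothness elsewhere --- is the intended one, but two of your intermediate steps fail as stated.

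First, the claim that $x_2\colon Y'\to\mathbb C$ ``is smooth and has no critical points,'' so that every critical point lies outside $Y'$, is false. The fibre over $0$ is $\{(y_1+t_1\prod w_i)(y_1+t_2\prod z_i)=0\}$, which is singular along the codimension-two locus where both factors vanish; this locus meets $Y'$ (all coordinates are nonzero there), and it is exactly where the object $\mathcal F(z_0)$ of subsection \ref{fibre0} lives. Being a graph over an open subset of affine space makes $Y'$ smooth, not the function $x_2$ on it. Second, ``for generic $c$ the member $f-cg$ is smooth'' is both insufficient (the lemma needs every $c\neq 0$) and incorrect: every member of the pencil passes through the base locus and is singular there --- for instance, for $n=2$ all partials of $f-cg$ vanish at $[y_1:y_2:z:w]=[0:1:0:0]$ for every $c$. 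Hence the smoothness of the fibres of $W$ away from $\{[0:1],[1:0]\}$ rests entirely on the behaviour of the proper transforms along the exceptional locus of $Bl_{\mathcal I'}$, which is precisely the step you defer as ``routine''; that deferred check is the actual content of the lemma and cannot be waved away. The clean way to handle the finite nonzero values, in the spirit of the paper's $\mathrm{LG}(3)$ argument, is to note that on $Y'$ the fibre over $c$ is cut out by $F-c\,y_1x_1\prod(w_iz_i)$, whose partial derivative with respect to $x_1$ equals $-c\,y_1\prod(w_iz_i)$ and is nonvanishing on $Y'$ for all $c\neq 0$ simultaneously; this replaces both your genericity claim and part of your blow-up bookkeeping, leaving only the exceptional locus over $\mathcal I'$ (which maps into the two special fibres) to be examined.
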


We now want to calculate the category of singularities of  $(\mathcal Z_{n+1},W)$.

\subsection{Singular fibres of of $\LG^\vee\!(n+1)$}\label{catsing}

We analyse the critical points of the potential  over  ${\mathcal Z}_{n+1} \subset \mathbb P^{2n-1} \times \mathbb P^1$ 
with coordinates $ [y_1:y_2: z_1:...:z_{n-1}: w_1:...:w_{n-1}], [t: s]$, where ${\mathcal Z}_{n+1}$ is given by the equation
$$t(y_1 y_2^{n-2}+ t_1 \prod w_i) (y_1 y_2^{n-2}+ t_2 \prod z_i) y_2=sy_1 \prod (w_iz_i).$$

\subsubsection{The fibre over $0$}\label{fibre0} This happens when $[t:s]=[0:1]$.
If $y_2=0$ the equality is satisfied at all points of a  divisor 
$D_y = {\mathcal Z}_{n+1} \cap( \mathbb P^{2n-1}\times \mathbb P^1)$,
and its contribution to the category of singularities would give us one object, 
 coming from the sheaf $\mathcal F(y_2)$ supported at the points of 
intersection $D_y \cap \{\sum D_{z_i}+\sum D_{w_i}.\}$), where $D_{z_i}$ and $D_{w_i}$ are the 
divisors that appear below. However, this happens outside of the variety  which interests us, 
 $\mathcal Y_{n+1}$, contained in the affine chart $y_2=1$. So, for our purposes here, this contribution is ignored. 

Assume now $y_2=1$, then we obtain the subset of the fibre over zero intersected with  our variety 
$\mathcal Y_{n+1}$ is cut out by the equation
$$W^{-1}([0:1]) \cap \mathcal Y_{n+1} =\left\{ (y_1 + \prod w_i) (y_1 +  \prod z_i) =0\right\}$$
 which is singular
 when either $y_1=- \prod z_i$ or $y_1=-\prod w_i$. 
Therefore, we obtain 2 nonperfect sheaves   on $W^{-1}([0:1]) \cap \mathcal Y_{n+1}$.
 These are  sheaves $\mathcal F(z)$
supported on $D_z=\left\{y_1+ \prod z_i=0\right\}$
and $\mathcal F(w)$ supported on  and $D_w=\left\{y_1 +  \prod w_i=0\right\}$ 
included in $W^{-1}([0:1])\cap \mathcal Y_{n+1}$
by $i_z$ and $i_w$  respectively.

Since there is a clear symmetry of the variety $\mathcal Y_{n+1}$ obtained by interchanging $z_i \leftrightarrow w_i$
the corresponding nonperfect sheaves are quasi-isomorphic. Hence,  in total the fibre over zero 
contributes with a single  sheaf, say $\mathcal F(z_0)$ corresponding to the 
singularity $y_1=- \prod z_i$. 
 Thus, the fibre over zero contributes one object to $D_{sg}(\mathcal Y_{n+1})$.

\subsubsection{The fibre over $\infty$}\label{fibreinf} This happens when $[t:s]=[1:0]$.
Considering the fibre over $\infty$, that is, when 
$$ y_1 w_1w_2\dots w_{n-1}z_1z_2\dots z_{n-1}=0,$$  we see that
the fibre at infinity has $2n-1$ irreducible components, with  sheaves supported
on their corresponding  individual components and of degree 1 over their support, as follows:

\begin{itemize}
\item $\mathcal F(w_i)$ supported on $w_i=0$,
\item $\mathcal F(z_i)$ supported on $z_i=0$ for $i=1, \dots, n-1$,
\item $\mathcal F(z_n)\ce \mathcal F(y_1)$ supported on $y_1=0$.
\end{itemize}
Given the symmetry between $z$'s and $w$'s we have that for a fixed $i$ the sheaves  $\mathcal F(w_i) \simeq \mathcal F(z_i)$ are isomorphic;
 so that 
we obtain a total contribution $n$ objects from the fibre at infinity, $\mathcal F(z_i)$ with $i=1, \dots n$.

We have now obtained all the $n+1$ generators  we needed to describe 
 the category of singularities $D_{sg}(\mathcal Z_{n+1})$ of the Landau--Ginzburg model $(\mathcal Z_{n+1},W) $, 
which is our proposed mirror of $\LG(n+1)$, they
 are $\mathcal F(z_0), \dots, \mathcal F(z_n)$; where $\mathcal F(z_0)$ comes from the fibre over zero, 
 and the others from the fibre over infinity.

In the following section we clarify some details about the mirror map.

\subsection{The mirror correspondence}

By \cite[Thm.\thinspace 2.2]{GGSM1} we know that  the Landau--Ginzburg model $\LG(n+1)=(\mathcal O(H_0), f_H)$ has 
 $n+1$ singularities  corresponding to the $n+1$ elements 
 $\{wH_0, w\in \mathcal W\}$, where $\mathcal W$ is  the Weyl group  of $\mathrm{SL}(n+1)$.

By \cite[Cor.\thinspace 3.4]{GGSM1} we know that the middle homology of the regular fibre $f_H^{-1}(c)$ 
 of  $\LG(n+1)$ is  
$$H_{\mathrm{mid}}\left(f_H^{-1}(c)\right)= \mathbb Z\oplus \dots \oplus \mathbb Z, \quad n \text{ times}.$$ 


The Weyl group $\mathcal W$ of $SL(n+1)$ is cyclic of order $n+1$. 
Let  $w_0$  be a generator of $\mathcal W$. 
Correspondingly, we denote by $L_i$ the Lagrangian sphere $S^{2n-1}$ that vanishes to the singularity 
 $w_0^i H_0$. Then $L_1, \dots, L_n$ generate the the middle homology of the regular fibre, that is, 
 $$H_{n-1}\left(f_H^{-1}(c)\right)= \mathbb Z<L_1, \dots, L_n>.$$ 
 
The remaining vanishing cycle, the sphere $L_{n+1}$  lives at infinity.
This can be shown by the same reasoning as the one we used to show lemma \ref{vanishc3}, 
namely, if the  cycle $(n+1)$  were a finite
linear combination of $L_1, \dots, L_n$, then the corresponding monodromy matrices ought to 
compose to give the identity, however, $L_{n+1}$ would need to move all elements of the basis 
and this is not consistent with the monodromy of a Dehn twist. So, we have also in the general case
a similar lemma:

\begin{lemma}
The vanishing cycles of  $\LG(n+1)$ can be seen as the $n$ generators $L_i$  of the 
middle homology of the regular fibre, 
together with one sphere at infinity $L_{n+1}$.  \end{lemma}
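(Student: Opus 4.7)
The plan is to generalize the argument of Lemma~\ref{vanishc} from the $\mathrm{LG}(3)$ case, replacing the explicit $2\times 2$ matrix computation with a dimension/Picard--Lefschetz argument that works uniformly in $n$.

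First I would recall the two inputs already cited in the preamble: \cite[Thm.\thinspace 2.2]{GGSM1} gives exactly $n+1$ critical points of $f_H$, in bijection with the Weyl orbit $\mathcal W \cdot H_0$; and \cite[Cor.\thinspace 3.4]{GGSM1} identifies $H_{n-1}(f_H^{-1}(c)) \cong \mathbb Z^{\oplus n}$ for any regular fibre. Pick a Hurwitz system of vanishing paths from a base point to the $n+1$ critical values, and let $L_1,\dots,L_{n+1}$ be the associated vanishing spheres. By reordering, arrange that $[L_1],\dots,[L_n]$ span $H_{n-1}$; this is possible because the $n+1$ vanishing cycles must jointly generate middle homology (the Milnor fibration becoming contractible after attachment of all thimbles).

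The core step is to show $[L_{n+1}] = 0$ in $H_{n-1}$ of the generic fibre. Each $L_i$ contributes a Dehn twist monodromy $T_i$ acting on homology by the Picard--Lefschetz transvection
\[
T_i(c) \;=\; c - \langle c, [L_i]\rangle \,[L_i],
\]
whose fixed locus is the hyperplane $[L_i]^{\perp}$. The product $T_1 T_2\cdots T_{n+1}$ computes the monodromy around a loop in the base $\mathbb C$ enclosing all critical values; compactifying the potential via the rational map $\psi$ of Section~2 so that the fibre over $\infty \in \mathbb P^1$ is the closed orbit $\mathbb F(1,n)$, this loop bounds a disc in $\mathbb P^1$, hence the product acts trivially on $H_{n-1}$ of the affine fibre. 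Were $[L_{n+1}]$ nonzero, the relation $T_{n+1} = (T_1 \cdots T_n)^{-1}$ would force $T_1 \cdots T_n$ to be a single symplectic transvection, contradicting the fact that the displacement operator $(T_1\cdots T_n) - \mathrm{id}$ has rank $n$ whenever $[L_1],\dots,[L_n]$ are linearly independent, whereas a transvection has displacement of rank one.

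Therefore $L_{n+1}$ is a Lagrangian sphere whose class vanishes in the middle homology of every smooth affine fibre. Exactly as in the $\mathrm{LG}(3)$ analysis, the only place left for such a cycle is the limit of the unstable manifold of the $(n+1)$-th critical point, namely inside the compactification divisor $D_1 \simeq \mathbb F(1,n;n+1)$ added when passing from $\mathcal O(H_0)$ to $\mathbb P^n \times \mathbb P^n$. The principal obstacle I foresee is justifying that the product of local monodromies really acts trivially on $H_{n-1}$ of the affine fibre: one has to check that the extra monodromy introduced at $\infty \in \mathbb P^1$ by the compactification is trivial on the middle homology of the open fibre. This should follow from the description in Section~2 of the fibre at infinity as the connected smooth divisor $\mathbb F(1,n)$, which meets each generic affine fibre in a divisor of constant topological type, but making this rigorous — either via a direct Picard--Lefschetz calculation on the compactified total space, or by collapsing a Leray spectral sequence — is the step I expect to require the most care.
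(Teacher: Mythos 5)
Your overall route is the same as the paper's: the paper proves the $\mathrm{LG}(3)$ case (Lemma~\ref{vanishc}) by writing $T_3=(T_2T_1)^{-1}$ explicitly and observing that the resulting matrix fixes only $0$, and for general $n$ it simply invokes ``the same reasoning'', namely that $T_{n+1}=(T_1\cdots T_n)^{-1}$ would have to move every basis element and so cannot be the Dehn twist of a nonzero class. Your proposal tries to make that heuristic quantitative via the rank of the displacement operator, which is the right instinct, but the key claim is wrong as stated: it is not true that $(T_1\cdots T_n)-\mathrm{id}$ has rank $n$ whenever $[L_1],\dots,[L_n]$ are linearly independent. The rank is controlled by the intersection numbers $\langle [L_i],[L_j]\rangle$, not by linear independence: the vanishing spheres are odd-dimensional ($S^{2n-1}$), so the intersection form on the middle homology is antisymmetric and each $T_i$ fixes its own class; if the $[L_i]$ were mutually orthogonal, every $T_i$ would act trivially and the displacement would be zero. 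Worse, for $n$ odd an antisymmetric form on $\mathbb Z^n$ has nontrivial radical, which is fixed pointwise by every Picard--Lefschetz transvection, so the displacement of $T_1\cdots T_n$ has rank at most $n-1$; your claimed rank $n$ is impossible already for $\mathrm{LG}(4)$.

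The contradiction you need is only ``rank at least $2$'' against the rank-$\le 1$ displacement of a single transvection, so the argument is repairable, but you must supply the missing input: a lower bound on the rank of the Gram matrix $\bigl(\langle [L_i],[L_j]\rangle\bigr)_{i,j\le n}$, for instance that consecutive vanishing cycles in the Weyl-ordered collection intersect nontrivially. This is precisely the hypothesis the paper smuggles in for the $\mathrm{LG}(3)$ case by demanding $x_1,x_2,y_1,y_2\neq 0$ in the matrices $T_1,T_2$. Your other flagged worry --- that $T_1\cdots T_{n+1}$ acts trivially because the enclosing loop bounds a disc through $\infty\in\mathbb P^1$ --- is a genuine subtlety, but it is one the paper shares (it simply posits $T_3T_2T_1=I$), so it does not distinguish your argument from the paper's; both leave the monodromy contribution of the fibre at infinity unexamined.
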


\begin{lemma} At the level of objects, the mirror map is given by the 1-1 correspondence:
$L_i \Leftrightarrow \mathcal F(z_i), \quad  \text{for } i= 1, \dots, n, \quad 
L_{n+1} \Leftrightarrow \mathcal F(z_0).$
\end{lemma}

In other words, we have obtained:

\begin{theorem*}[\ref{mirrorn}]
The intrinsic mirror symmetry algorithm produces an $\LG$-model $(\mathcal Y_{n+1},g)$
for which
 $$ \Fuk(\LG(n+1)) \simeq D_{sg}(\mathcal Y_{n+1})$$
 is a 1-1 correspondence of objects.
\end{theorem*}

Note that on the symplectic side I have described the vanishing cycles, but 
not the morphisms between them in the Fukaya--Seidel category. This happens because 
even though I know existence of Lagrangian  thimbles taking the given Lagrangian vanishing cycles to  
their corresponding  critical points, I do not know their explicit expressions in the total space, which 
would be necessary for calculating the morphisms. 
Nevertheless, despite not knowing the morphisms on the symplectic (A) side, I can  calculate  morphisms 
on the algebraic (B) side using a simple Macaulay2 code presented in Appendix \ref{thomas} code  \ref{M2}.
We now continue to the description of the   category $D_{sg}({\mathcal Z}_{n+1})$. 

\subsection{Category of singularities of $\LG^\vee\!(n+1)$}

The Orlov category of $\LG^\vee\!(n+1)= ({\mathcal Z}_{n,+1}W)$ is by definition  the sum of the categories of 
singularities  of the fibres over zero and infinity, 
which are disjoint subvarieties of ${\mathcal Z}_{n+1}$. 
The singular fibres  
were calculated in subsections \ref{fibre0} and \ref{fibreinf}. Their singularities categories are generated by 
$\mathcal F(z_0)$ and $\mathcal F(z_1), \dots, \mathcal F(z_n)$ respectively.
To express the morphisms, we first set some notation. 

For calculations corresponding to the fibre at infinity we
set the polynomial  ring $R = \mathbb C[z_1, \dots, z_n]$ with  ideal  $I
= (z_1 \dots z_n) \subset R$  and
quotient ring $S \ce R/I$. We consider modules $J_i$  over $S$ which correspond to the sheaves $\mathcal F(z_i)$ with 
$i=1, \dots n$.  
Hence, we set $S$-ideals
$$ J_i \ce (z_i) \subset S \qquad \text{for}\quad  i=1, \dots n. $$

The required resolution of the module $J_i$ is given by the unbounded, periodic resolution
\[ 0 \leftarrow J_i \xleftarrow{\quad\varepsilon\quad}
   P_0 \cong S^1 \xleftarrow[(z_1 \dots  \hat{z}_i \dots z_n)]{\quad d_0 \quad}
   P_1 \cong S^1 \xleftarrow[(z_i)]{\quad d_1 \quad}
   P_2 \cong S^1 \xleftarrow[(z_1 \dots \hat{z}_i \dots z_n)]{\quad d_2 \quad}
   P_3 \cong S^1 \xleftarrow[(z_i)]{\quad d_3\quad}
   \dotso \text{ .} \]
Then $\Hom_S(P_\bullet, J_j)$ is the cochain complex
\[  0 \longrightarrow
   \Hom_S(P_0 = S^1, J_j) \xrightarrow{f \circ -}
   \Hom_S(P_1 = S^1, J_j) \xrightarrow{g \circ -}
   \Hom_S(P_2 = S^1, J_j) \xrightarrow{f \circ -}
   \dotso \text{ ,} \]
and it represents $R\Hom(J_i, J_j)$ in the derived category.
Since $\Hom_S(S, M) \cong M$ for any $S$-module $M$,
 we have maps
\begin{equation}\label{extsn} 0 \longrightarrow
   (z_j)S \xrightarrow[(z_1\dots \hat{z}_i \dots z_n)]{\quad d_{0*} = f_* \quad }
   (z_j)S \xrightarrow[(z_i)]{\quad d_{1*} = g_* \quad}
   (z_j)S \xrightarrow[(z_1\dots \hat{z}_i\dots z_n)]{\quad d_{2*} = f_* \quad }
   (z_j)S \xrightarrow[(z_i)]{\quad d_{3*} = g_* \quad}
   \dotso \text{ .} \end{equation}
We set the notation  $N_{ij} \ce\ker(d_{2k*}) = (z_i z_j)S$ and $M_{ij} = \ker d_{(2k+1)*} / \im d_{(2k)*}$
for all $k \geq 0$.
We then have:
\[ \Ext^k(S_i, S_j) = \begin{cases}
      N_{ij} & \text{for $k = 0$,} \\
      M_{ij} & \text{for odd $k$, and}     \\
      0 & \text{for even $k$, $k > 0$.} \end{cases} \]
See Appendix \ref{thomas} for further details.      
For $1\leq i<j\leq n$,
the corresponding sheaves 
$\mathcal F(z_1), \dots \mathcal F (z_n)$
therefore satisfy:
$${\mathbf \Hom}(\mathcal F(z_i), \mathcal F(z_j) = N_{ij} \bigoplus_{t=2s+1} M_{ij}[t]$$ 
and 
$${\mathbf \Hom}(\mathcal F(z_0), \mathcal F(z_j))=0.$$

Therefore, we have obtained the full description of the category of singularities, proving: 

\begin{theorem*}[\ref{ort}] The Orlov category of singularities  of the Landau--Ginzburg model $({\mathcal Z}_{n+1},W)$
is generated by the nonperfect shaves   $\mathcal F(z_0), \mathcal F(z_1),\dots, \mathcal F (z_n)$
with  morphisms: 
$${\mathbf \Hom}\left(\mathcal F(z_i), \mathcal F(z_j)\right) = 
\left\{\begin{array}{lllll}
N_{ij} \bigoplus_{t=2s+1} M_{ij}[t]  & \text{if} & i \cdot j \neq 0,\\
0 & \text{if} & i\cdot j =0 \text{.}
\end{array}
\right.$$ 
\end{theorem*}

\section{Acknowledgements} 
The author is grateful to Mark Gross and Dmitry Orlov for 
patiently answering 
many questions and to Thomas  K\"oppe for help with computations in Macaulay2. 
E. Gasparim is a Senior Associate of the 
Abdus Salam International 
Centre for Theoretical Physics, Italy, where the 
final version of this article was written.

\appendix

\small{

\section{Nonperfect sheaves}\label{nonperfect}

Let $Z_2\ce \Tot \mathcal O_{\mathbb P^1} (-2)$ denote the total space of the canonical line bundle on $\mathbb P^1$ and 
let $X_2 $ be the variety obtained from it by contracting the zero section to a point with $\pi\colon Z_2 \rightarrow X_2$
the contraction. Then $X_2$ is a concrete model for the ordinary double point surface singularity. 

Consider 
the line bundle $L = \mathcal O_{Z_2}(-3)$; hence,  
the line bundle on the surface $Z_2$ obtained  from  pulling back $\mathcal O_{\mathbb P^1}(-3)$.
 Set $\mathcal F\ce  \pi_*L$ as the rank 1 sheaf on $X_2$ obtained as direct image.

\begin{lemma}\label{genODP}
  $\mathcal F$ generates $D_{sg}(X_2)$.
 \end{lemma}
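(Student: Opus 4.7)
The plan is to identify $X_2$ with the affine $A_1$ surface singularity and invoke the Orlov--Buchweitz equivalence $D_{Sg}(X_2) \simeq \underline{\mathrm{MCM}}(X_2)$ between the singularity category and the stable category of maximal Cohen--Macaulay modules. Concretely, I would realize $X_2 = \Spec R$ with $R = \mathbb{C}[x,y,z]/(xy - z^2)$, so that $\pi\colon Z_2 \to X_2$ is the minimal resolution whose single exceptional curve is the zero section $C \simeq \mathbb{P}^1$ with $C^2 = -2$.

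Next, I would analyze $\mathcal{F} = \pi_* L$ as an $R$-module. Since $\pi$ is an isomorphism away from the singular point, $\mathcal{F}$ is generically of rank one. Because $R$ is a two-dimensional normal Cohen--Macaulay ring and $\pi$ is a proper birational morphism from a smooth surface, the pushforward of any line bundle is reflexive, hence a MCM $R$-module. (The higher direct image vanishes here since $\mathcal{O}_C(-3)$ has vanishing $R^1$ only fiberwise in the $A_1$ setting, but in any case only $\pi_*L$ enters the statement.)

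The decisive step is to invoke the finite Cohen--Macaulay representation theory of the ODP: up to isomorphism $R$ has exactly two indecomposable MCM modules, namely $R$ itself and the divisorial ideal $I = (x,z)$ representing the generator of $\mathrm{Cl}(X_2) \simeq \mathbb{Z}/2\mathbb{Z}$. In $\underline{\mathrm{MCM}}(R)$ the free module $R$ becomes zero, so the stable category is generated by the single object $[I]$. It therefore suffices to show that $\mathcal{F}$ is not free, equivalently that its class in $\mathrm{Cl}(X_2)$ is nonzero. This follows because $L\vert_C = \mathcal{O}_{\mathbb{P}^1}(-3)$ has odd degree, so $\mathcal{F}$ maps to the nontrivial element of $\mathrm{Cl}(X_2) \simeq \mathbb{Z}/2\mathbb{Z}$ under the standard isomorphism $\mathrm{Cl}(X_2) \simeq \mathrm{Pic}(Z_2)/\langle C\rangle \simeq \mathbb{Z}/2\mathbb{Z}$ induced by restriction-to-$C$. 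Hence $[\mathcal{F}] = [I]$ in $\underline{\mathrm{MCM}}(R)$, and $\mathcal{F}$ generates $D_{Sg}(X_2)$.

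The main obstacle is pinning down the convention for $\mathcal{O}_{Z_2}(-3)$ and carrying out the identification $[\mathcal{F}] = [I]$ modulo free summands: once the restriction $L\vert_C$ is known and the Kleiman-style isomorphism $\mathrm{Cl}(X_2) \simeq \mathrm{Pic}(Z_2)/\mathbb{Z}\,[C]$ is in hand, the rest is formal. An alternative, slightly more computational route that avoids the classification statement is to write down an explicit two-periodic matrix factorization of $xy-z^2$ (the one given by the matrix $\bigl(\begin{smallmatrix} x & z \\ z & y\end{smallmatrix}\bigr)$) and to check by Čech computation that $\pi_*\mathcal{O}_{Z_2}(-3)$ is the cokernel of that matrix (up to twist), which realizes $\mathcal{F}$ as the unique nontrivial matrix factorization and hence as a generator of $D_{Sg}(X_2)$.
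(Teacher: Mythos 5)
Your proof is correct, but it takes a genuinely different route from the paper's. The paper's own proof is a two-line citation argument: it invokes \cite[Example 2.14]{BGK} (expanded in Appendix \ref{nonperfect}) for the explicit infinite $2$-periodic resolution of $\mathcal F$ --- equivalently the matrix factorization $\left(\begin{smallmatrix} y & w \\ -x & -y \end{smallmatrix}\right)$ of $y^2-xw$ --- to conclude that $\mathcal F$ is not perfect, hence nonzero in $D_{Sg}(X_2)$, and then quotes Orlov's result that $D_{Sg}$ of the ordinary double point is generated by a single object. So the paper essentially carries out the ``alternative, more computational route'' you sketch in your final paragraph. Your main argument instead goes through the Buchweitz--Orlov equivalence $D_{Sg}(X_2)\simeq \underline{\mathrm{MCM}}(R)$, the finite Cohen--Macaulay type of the $A_1$ singularity (only indecomposables $R$ and $I$), and the class-group computation $\mathrm{Cl}(X_2)\cong \mathrm{Pic}(Z_2)/\mathbb Z[C]\cong\mathbb Z/2$ with $[\mathcal F]$ nontrivial because $\deg(L|_C)=-3$ is odd. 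This buys something the paper's phrasing leaves implicit: ``nonzero'' plus ``singly generated category'' does not formally imply ``generates,'' whereas your classification shows every nonzero object of $\underline{\mathrm{MCM}}(R)$ is a direct sum of copies of $[I]$ and therefore generates. What the paper's route buys is independence from the CM classification and an explicit presentation of $\mathcal F$ that is reused later in the text.

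One step you should shore up is the claim that $\pi_*L$ is reflexive (hence MCM), which is what licenses applying the MCM classification to $\mathcal F$ itself rather than to its reflexive hull. It is true and can be checked directly: since $\mathcal O_{Z_2}(C)\cong p^*\mathcal O_{\mathbb P^1}(-2)$, one has $\mathcal O_{Z_2}(-3)\cong \mathcal O_{Z_2}(-1)\otimes\mathcal O_{Z_2}(C)$, and the quotient $\pi_*\mathcal O_{Z_2}(-3)/\pi_*\mathcal O_{Z_2}(-1)$ injects into $H^0(\mathbb P^1,\mathcal O(-3))=0$, so $\mathcal F\cong\pi_*\mathcal O_{Z_2}(-1)$, the nontrivial divisorial ideal. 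Also, your parenthetical that the higher direct image vanishes is not correct ($H^1(C,\mathcal O(-3))\neq 0$ contributes to $R^1\pi_*L$), but as you note this is irrelevant to the statement.
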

 
 \begin{proof}
 \cite[Example\thinspace 2.14]{BGK} computes the resolution of $\mathcal F$ showing  it is infinite and periodic (see details below), 
so that $\mathcal F$ is a coherent sheaf which is not perfect and therefore nonzero in the Orlov category of singularites of $X_2$.
Orlov \cite[Sec.\thinspace 3.3]{Or1} showed that $D_{sg}(X_2)$ has a single generator and $\mathcal F$ is a concrete 
geometric description of it. 
 \end{proof}

We  expand the calculations given in 
 \cite[Example\thinspace 2.14]{BGK} to show that  $\mathcal F$ is not perfect.
 Let $M= \hat{\mathcal F}_0$ be the completion of the stalk at the singular point.
We will describe the structure of $M$ as a module over global 
functions and show that  its resolution  is infinite and periodic.
 In canonical coordinates $Z_2$  is given by charts $U = \{z,u\}$, $V= \{z^{-1},z^2u\}$ 
  and the transition matrix for the bundle $L = \mathcal O(-3)$  is $z^3$.
 The coordinate ring of $X_2$  is $\mathbb C[x,y,z]/(y^2- xw)$, 
 and the lifting map induced by $\pi$  is (on the $U$-chart) $x\mapsto u$ , $y\mapsto zu$ 
 and $w\mapsto z^2u$.
 
 By the Theorem on Formal Functions 
 $M \simeq \lim_{\leftarrow} H^0(\ell_n, L\vert_{\ell_n})$ where  
 $\ell_n$ is the $n$-th infinitesimal neighborhood of $\ell$, and 
using  \cite[Lemma\thinspace 2.5]{BGK} to determine $M$ it suffices to 
compute $\sigma \in H^0(\ell_N, L\vert_{\ell_N})$ for a large enough $N$ and direct computation of \v{C}ech
cohomology gives the expression of such a global section as:
$\sigma = \sum_{r=2}^N\sum_{s=0}^{\lfloor \frac{2r-3}{2}\rfloor} a_{rs}z^su^r.$
It then turns out that all terms of $\sigma$ are generated over 
$$\hat{\mathcal O}_0= \mathbb C[[x,y,z]]/(y^2- xw)$$ by
$\beta_0=u^2=x^2$ and $\beta_1= zu^2=xy$ and we obtain the structure of $M$  as an  $\hat{\mathcal O}_0$-module:
$$M = \hat{\mathcal O}_0[\beta_0,\beta_1]/R^1$$ where 
$R^1$ is the set of relations
$$R^1=\left\{ \begin{array}{l}R^1_1:\beta_0y-\beta_1x\\
R^1_2:\beta_0w-\beta_1y\end{array}
\right..
$$
But then these relations themselves satisfy a second set of relations 
$$R^2=\left\{ \begin{array}{l}R^2_1:R^1_1y-R^1_2x\\
R^2_2:R^1_1w-R^1_2y\end{array}
\right.,
$$
 and so on 
 $$R^n=\left\{ \begin{array}{l}R^n_1:R^{n-1}_1y-R^{n-1}_2x\\
R^n_2:R^{n-1}_1w-R^{n-1}_2y\end{array}
\right..
$$
The corresponding resolution of $M$ therefore is 

$$ \cdots \rightarrow   \hat{\mathcal O}_0 \oplus  \hat{\mathcal O}_0 \stackrel{\tiny \left(\begin{matrix}y & w\\ -x &-y   \end{matrix}\right)}{\longrightarrow}
 \hat{\mathcal O}_0 \oplus  \hat{\mathcal O}_0
\stackrel{\tiny \left(\begin{matrix} y & w\\ -x &-y   \end{matrix}\right)}{\longrightarrow} M \rightarrow 0 \text{.}$$

\section{Orlov category of $\Pi z_i$}\label{thomas}

In the same style as \cite[Sec.\thinspace 3.3]{Or1} we now calculate $D_{sg}(X_0)$ for the affine variety 
$X_0= \mbox{Spec}\left(\mathbb C[z_1, \dots, z_n]/ \Pi z_i\right)$.
Denote by $A$ the algebra $\mathbb C[z_1, \dots, z_n]/ \Pi z_i$. By \cite[Prop.\thinspace 1.23]{Or1}, 
any object of $D_{sg}(X_0)$ comes from a finite dimensional module $M$ over the algebra $A$.

We claim that the modules $M_i= <z_i>$ generated over $A$ by $z_i$ are indecomposable, and  generate 
all modules over $A$. Furthermore, in the correspondence between sheaves on $X_0$ and modules over $A$, 
that is, $\mathcal O_X$-modules =  module over $H^0 (X, \mathcal O_X)$,
they correspond to sheave that are not perfect, since  each of these modules possesses only infinite resolutions. 
This can be verified in Macaulay2 by a simple code, which for  $n=4$ is:
\begin{equation}\label{M2} M2 \quad \text{code}: \end{equation}

Macaulay2, version 1.17.2.1:

i1:k=ZZ/101;

i2:R= k[z1,z2,z3,z4];

i3:I=ideal(z1*z2*z3*z4);

i4:V=R/I;

i5:J1= ideal(z1);

i6:res J1

o6: 
$$ V^1\stackrel{\tiny \left(\begin{matrix} z1\end{matrix}\right)} {\longleftarrow} V^1\stackrel{\tiny \left(\begin{matrix} z2z3z4\end{matrix}\right)}{\longleftarrow}V^1\stackrel{\tiny \left(\begin{matrix} z1\end{matrix}\right)} {\longleftarrow} V^1\stackrel{\tiny \left(\begin{matrix} z2z3z4\end{matrix}\right)}{\longleftarrow}V^1\stackrel{\tiny \left(\begin{matrix} z1\end{matrix}\right)} {\longleftarrow} V^1 $$

The general resolution of $M_i$ takes the form:

$$M_i\stackrel{\tiny \left(\begin{matrix} z_i \end{matrix}\right)} {\longleftarrow} M_i\stackrel{\tiny \left(\begin{matrix} \Pi_{i\neq i}z_i\end{matrix}\right)}
 {\longleftarrow}
M_i\stackrel{\tiny \left(\begin{matrix} z_i \end{matrix}\right)} {\longleftarrow} M_i\stackrel{\tiny \left(\begin{matrix} \Pi_{i\neq i}z_i\end{matrix}\right)}
 {\longleftarrow} M_i \longleftarrow \cdots .
$$

Because any resolution of $M_i$ is infinite the corresponding sheaf will not be perfect.
The following  calculation of  morphisms was written by Thomas K\"oppe.

Let $R = \Bbbk[z_1, z_2, z_3, z_4]$ be a polynomial ring, and $I
= (z_1 z_2 z_3 z_4) \subset R$ a homogeneous ideal, and consider the
quotient ring $S \ce R/I$. We consider modules over $S$. In particular,
for the $S$-ideals
\[ J_1 \ce (z_1) \subset S \qquad \text{and} \qquad J_2 \ce (z_2) \subset S \]
we consider $J_1$ and $J_2$ as $S$-modules in the natural way.

To compute $\Ext^n(A, B)$ in the category of $S$-modules, we can
replace $A$ with a projective resolution
\[ 0 \leftarrow A \xleftarrow{\quad\varepsilon\quad} P_0 \xleftarrow{\quad d_0 \quad} P_1 \xleftarrow{\quad d_1 \quad} \dotso \text{ ,} \]
where the augmentation map $\varepsilon$ is the projection onto the cokernel of $d_0$,
and then obtain $\Ext^n(A, B)$ as the $n^\text{th}$ cohomology of the cochain complex $\Hom(P_\bullet, B)$.

To compute $\Ext_S^n(J_1, J_2)$, we
first  need a resolution of the module $J_1$,
which is given by the unbounded, periodic resolution
\[ 0 \leftarrow J_1 \xleftarrow{\quad\varepsilon\quad}
   P_0 \cong S^1 \xleftarrow[(z_2 z_3 z_4)]{\quad d_0 \quad}
   P_1 \cong S^1 \xleftarrow[(z_1)]{\quad d_1 \quad}
   P_2 \cong S^1 \xleftarrow[(z_2 z_3 z_4)]{\quad d_2 \quad}
   P_3 \cong S^1 \xleftarrow[(z_1)]{\quad d_3\quad}
   \dotso \text{ .} \]
In summary, the resolution $P_\bullet = (P_i, d_i)$ has $P_i \cong S^1$ for all $i = 0, 1, \dotsc$,
and
\[ d_i \colon S^1 \to S^1 \text{ ,\quad} d_i = \begin{cases}f \ce (z_2 z_3 z_4) & \text{for even $i$, and} \\ g \ce (z_1) & \text{for odd $i$,} \end{cases} \]
and $J_1 = \coker(f)$. (The module $J_1$ is not free, since $z_1$ is a zero divisor.)
Then $\Hom_S(P_\bullet, J_2)$ is the cochain complex
\[  0 \longrightarrow
   \Hom_S(P_0 = S^1, J_2) \xrightarrow{f \circ -}
   \Hom_S(P_1 = S^1, J_2) \xrightarrow{g \circ -}
   \Hom_S(P_2 = S^1, J_2) \xrightarrow{f \circ -}
   \dotso \text{ ,} \]
and it represents $R\Hom(J_1, J_2)$ in the derived category.
But note that $\Hom_S(S, M) \cong M$ for any $S$-module $M$,
so we have maps
\begin{equation}\label{exts3} 0 \longrightarrow
   (z_2)S \xrightarrow[(z_2 z_3 z_4)]{\quad d_{0*} = f_* \quad }
   (z_2)S \xrightarrow[(z_1)]{\quad d_{1*} = g_* \quad}
   (z_2)S \xrightarrow[(z_2 z_3 z_4)]{\quad d_{2*} = f_* \quad }
   (z_2)S \xrightarrow[(z_1)]{\quad d_{3*} = g_* \quad}
   \dotso \text{ .} \end{equation}
First, observe that $\ker(d_{2k*}) = (z_1 z_2)S$ for all $k \geq 0$ (multiples of
$z_1 z_2$ are those elements of $J_2$ that vanish when multiplied by $z_2 z_3 z_4$).
In particular, $\Ext^0(J_1, J_2) = \Hom(J_1, J_2) = \ker(d_{0*}) = (z_1 z_2)S$.
Next, $\im(d_{2k + 1}) = (z_1 z_2)S$ for all $k \geq 0$ (the result of multiplying
an element in $J_2$ by $z_1$ is a multiple of $z_1 z_2$). Finally,
\[ \Ext^k(S_1, S_2) = \begin{cases}
      (z_1 z_2)S & \text{for $k = 0$,} \\
      M & \text{for odd $k$, and}     \\
      0 & \text{for even $k$, $k > 0$.} \end{cases} \]
It remains to describe $M = \ker d_{(2k+1)*} / \im d_{(2k)*}$.
We have that he kernel of $d_{(2k+1)*}$ is $(z_2 z_3 z_4)S$, i.e. elements of $J_2$
that vanish when multiplied with $z_1$. The image of $d_{(2k)*}$ consists of multiples of $z_2^2 z_3 z_4$.
Thus the quotient consist of those elements of $(z_2 z_3 z_4)S$ that are linear in $z_2$.
Abstractly, this can be presented as cokernel of the map $[z_2 \ z_1] \colon S^2 \to S^1$,
i.e. as $S / \{ z_2 a + z_1 b \}$: an element of $\Ext^\text{odd}(S_1, S_2)$
is $z_2$ times a function of $(z_3, z_4)$ only.
}

\end{document}